\newcommand{\RR}{\mathbb{R}}
\newcommand{\ZZ}{\mathbb{Z}}
\newtheorem{theorem}{Theorem}[section]
\newtheorem{lemma}[theorem]{Lemma}
\newtheorem{corollary}[theorem]{Corollary}
\newtheorem{remark}[theorem]{Remark}
\newcommand{\spb}[1]{\smallskip}
\newcommand{\mpb}[1]{\medskip}
\newcommand{\bpb}[1]{\bigskip}
\renewcommand{\a}{\alpha}
\newcommand{\D}{\Delta}
\begin{document}
%\vspace*{2cm}

\DeclareGraphicsExtensions{.jpg,.pdf,.mps,.png}

\title{\vspace{6cm} Upper and lower bounds for topological indices on unicyclic graphs}

\author[\'{A}lvaro Mart\'{\i}nez-P\'erez]{\'{A}lvaro Mart\'{\i}nez-P\'erez}
\address{ Facultad CC. Sociales de Talavera,
Avda. Real F\'abrica de Seda, s/n. 45600 Talavera de la Reina, Toledo, Spain}
\email{alvaro.martinezperez@uclm.es}
%\thanks{$^{(1)}$ Supported in part by a grant
%from Ministerio de Ciencia, Innovaci\'on y Universidades (PGC2018-098321-B-I00), Spain.}

\author[Jos\'e M. Rodr{\'\i}guez]{Jos\'e M. Rodr{\'\i}guez$^{(1)}$}
\address{Departamento de Matem\'aticas, Universidad Carlos III de Madrid,
Avenida de la Universidad 30, 28911 Legan\'es, Madrid, Spain}
\email{jomaro@math.uc3m.es}
\thanks{$^{(1)}$ Corresponding author.}
%\thanks{$^{(2)}$ Supported in part by two grants from Ministerio de Econom{\'\i}a y Competitividad, Agencia Estatal de
%Investigaci\'on (AEI) and Fondo Europeo de Desarrollo Regional (FEDER) (MTM2016-78227-C2-1-P and MTM2017-90584-REDT), Spain.}

\date{\today}

\begin{abstract}
The aim of this paper is to obtain new inequalities for a large family of topological indices restricted to unicyclic graphs
and to characterize the set of extremal unicyclic graphs with respect to them.
This family includes variable first Zagreb, variable sum exdeg, multiplicative second Zagreb and Narumi-Katayama indices.
Our main results provide upper and lower bounds for these topological indices on unicyclic graphs, fixing or not the maximum degree or the number of pendant vertices.
\end{abstract}

\maketitle{}

%\centerline{{\bf Topological Indices on Unicyclic graphs 07.tex}}

{\it Keywords: Variable first Zagreb index, variable sum exdeg index, multiplicative second Zagreb index, Narumi-Katayama index, unicyclic graphs.}

{\it 2010 AMS Subject Classification numbers: 05C07, 92E10.}

\section{Introduction}

A topological descriptor is a single number that represents a chemical structure in graph-theoretical terms via the
molecular graph.
They play a significant role in mathematical chemistry especially in the QSPR/QSAR investigations.
A topological descriptor is called a topological index if it correlates with a molecular property.
Topological indices are used to understand physicochemical properties of chemical compounds,
since they capture some properties of a molecule in a single number.
Hundreds of topological indices have been introduced and studied, starting with the
seminal work by Wiener \cite{Wi}.
The \emph{Wiener index} of $G$ is defined as
$$
W(G)=\sum_{\{ u,v\}\subseteq V(G)} d(u,v),
$$
where $\{ u,v\}$ runs over every pair of vertices in $G$.

%A single number, representing a chemical structure in graph-theoretical terms via the
%molecular graph, is called a topological descriptor and if it in addition correlates with
%a molecular property it is called topological index, which is used to understand physicochemical
%properties of chemical compounds.
%Topological indices are interesting since they capture some of the properties of a molecule in a single number.
%Hundreds of topological indices have been introduced and studied, starting with the
%seminal work by Wiener in which he used the sum of all shortest-path distances of
%a (molecular) graph for modeling physical properties of alkanes (see \cite{Wi}).

Topological indices based on end-vertex degrees of edges have been
used over 40 years. Among them, several indices are recognized to be useful tools in
chemical researches.
Probably, the best know such descriptor is the Randi\'c connectivity
index ($R$) \cite{R}. %There are more than thousand papers and a couple of books dealing with
%this molecular descriptor (see, e.g., \cite{GF}, \cite{LG}, \cite{LS}, \cite{RS}, \cite{RS0} and the references therein).

Two of the main successors of the Randi\'c index are the first and second Zagreb indices,
denoted by $M_1$ and $M_2$, respectively, and introduced by Gutman et al. in \cite{GT} and \cite{GRT}.
They are defined as
$$
M_1(G) = \sum_{u\in V(G)} d_u^2,
\qquad
M_2(G) = \sum_{uv\in E(G)} d_u d_v ,
\qquad
$$
where $uv$ denotes the edge of the graph $G$ connecting the vertices $u$ and $v$, and
$d_u$ is the degree of the vertex $u$.
See the recent surveys on the Zagreb indices \cite{AGMM}, \cite{BDFG} and \cite{GMM}.

Along the paper, we will denote by $m$ and $n$, the cardinality of the sets
$E(G)$ and $V(G)$, respectively.

Mili\v{c}evi\'c and Nikoli\'c defined in \cite{MN} the \emph{variable first and second Zagreb indices} as
$$
M_1^{\a}(G) = \sum_{u\in V(G)} d_u^{\a},
\qquad
M_2^{\a}(G) = \sum_{uv\in E(G)} (d_u d_v)^\a ,
$$
with $\a \in \RR$.

Note that $M_1^{0}$ is $n$, $M_1^{1}$ is $2m$, $M_1^{2}$ is the first Zagreb index $M_1$, $M_1^{-1}$ is the inverse degree index $ID$ \cite{Faj},
$M_1^{3}$ is the forgotten index $F$, etc.;
also, $M_2^{0}$ is $m$, $M_2^{-1/2}$ is the usual Randi\'c index, $M_2^{1}$ is the second Zagreb index $M_2$,
$M_2^{-1}$ is the modified second Zagreb index \cite{NKMT}, etc.
%Note that it is interesting to study $M_1^\a$ for $\a\neq 0,1,$ and $M_2^\a$ for $\a\neq 0,$ since if $G$ has $n$ vertices and $m$ vertices, then
%$M_1^0(G)=n$, $M_1^1(G)=2m$ and $M_2^0(G)=m$.

The concept of variable molecular descriptors was proposed as a new way of
characterizing heteroatoms in molecules (see \cite{R2}, \cite{R3}), but also to assess the structural differences (e.g.,
the relative role of carbon atoms of acyclic and cyclic parts in alkylcycloalkanes \cite{RPL}).
The idea behind the variable molecular descriptors is that the variables are determined during the
regression so that the standard error of estimate for a particular studied property is as small as possible.

In the paper of Gutman and Tosovic \cite{Gutman8}, the correlation abilities of $20$ vertex-degree-based topological indices
occurring in the chemical literature were tested for the case of standard
heats of formation and normal boiling points of octane isomers.
It is remarkable to realize that the variable second Zagreb index $M_2^\alpha$
with exponent $\alpha = -1$ (and to a lesser extent with exponent $\alpha = -2$)
performs significantly better than the Randi\'c index ($R=M_2^{-1/2}$).

The variable second Zagreb index is used in the structure-boiling point modeling of benzenoid hydrocarbons \cite{NMTJ}.
Various properties and relations of these indices are discussed in several papers (see, e.g., \cite{AP}, \cite{LZhao}, \cite{LL}, \cite{SDGM}, \cite{ZWC}, \cite{ZZ}).

Several authors attribute the beginning of the
study of unicyclic graphs to Dantzig's book on linear programming (1963), in which unicyclic graphs (called \emph{pseudotrees} there) arise in the solution of certain network flow problems
\cite{Dantzig}.
Since then, the study of unicyclic graphs is a main topic in graph theory.
For instance, unicyclic graphs form graph-theoretic models of functions and occur in several algorithmic problems.

Although only about 1000 benzenoid hydrocarbons are known, the number of
possible benzenoid hydrocarbons is huge. For instance, the number of
possible benzenoid hydrocarbons with 35 benzene rings is $5.85 \times 10^{21}$ \cite{NGJ}.
Therefore, the modeling of their physico-chemical properties is very important in order
to predict properties of currently unknown species.
The main reason for use topological indices is to obtain prediction of some property of molecules (see, e.g., \cite{Gutman7}, \cite{Estrada3}, \cite{Gutman8}, \cite{RPL}).
Therefore, given some fixed parameters, a natural problem is to find the graphs that minimize (or maximize) the value of a topological index on the set of graphs
satisfying the restrictions given by the parameters (see, e.g., \cite{BE1}, \cite{BE2}, \cite{Cruz}, \cite{Das4}, \cite{Du2}, \cite{Du3}, \cite{Edwards}, \cite{Gutman32}).
The aim of this paper is to obtain new inequalities for a large family of topological indices restricted to unicyclic graphs, fixing or not the maximum degree or the number of pendant vertices, and to characterize the extremal unicyclic graphs with respect to them.
This family includes variable first Zagreb, variable sum exdeg, multiplicative second Zagreb and Narumi-Katayama indices.

Throughout this work, $G=(V (G),E (G))$ denotes a (non-oriented) finite connected simple (without multiple edges and loops) non-trivial ($E(G) \neq \emptyset$) graph.
Note that the connectivity of $G$ is not an important restriction, since if $G$ has connected components $G_1,G_2,\dots,G_r,$ then we have either
$I(G) = I(G_1) + I(G_2) + \cdots + I(G_r)$ or
$I(G) = I(G_1) I(G_2) \cdots I(G_r)$
for every index $I$ in this paper; furthermore, every molecular graph is connected.
If $G_1$ and $G_2$ are isomorphic graphs, we write $G_1 = G_2$.

\section{Schur convexity}

Given two $n$-tuples ${\bf{x}}=(x_1,\dots, x_n)$, ${\bf{y}}=(y_1,\dots ,y_n)$ with $x_1\geq x_2\geq \cdots \geq x_n$ and $y_1\geq y_2 \geq \cdots \geq y_n$, then ${\bf{x}}$ \emph{majorizes} ${\bf{y}}$
(and we write ${\bf{x}}\succ {\bf{y}}$ or ${\bf{y}}\prec {\bf{x}}$) if
\[\sum_{i=1}^k x_i \geq \sum_{i=1}^k y_i,\]
for $1\leq k \leq n-1$ and
\[\sum_{i=1}^n x_i = \sum_{i=1}^n y_i.\]

A function $\Phi \colon \RR^n \to \RR$ is called \emph{Schur-convex} if $\Phi({\bf{x}})\geq \Phi({\bf{y}})$ for all ${\bf{x}}\succ {\bf{y}}$.
Similarly, the function is \emph{Schur-concave} if $\Phi({\bf{x}})\leq \Phi({\bf{y}})$ for all ${\bf{x}}\succ {\bf{y}}$.
We say that $\Phi$ is \emph{strictly Schur-convex} (respectively, \emph{strictly Schur-concave}) if $\Phi({\bf{x}})> \Phi({\bf{y}})$ (respectively, $\Phi({\bf{x}})< \Phi({\bf{y}})$)
for all ${\bf{x}}\succ {\bf{y}}$ with ${\bf{x}} \neq {\bf{y}}$.

If
$$
\Phi({\bf{x}})=\sum_{i=1}^n f(x_i),
$$
where $f$ is a convex (respectively, concave)
function defined on a real interval, then $\Phi$ is Schur-convex (respectively, Schur-concave).
If $f$ is strictly convex (respectively, strictly concave), then $\Phi$ is strictly Schur-convex (respectively, strictly Schur-concave).

Thus, $$M_1^{\a}(G) = \sum_{u\in V(G)} d_u^{\a},$$
is strictly Schur-convex if $\a\in (-\infty,0)\cup (1,\infty)$ and strictly Schur-concave if $\a \in (0,1)$.

\section{Unicyclic graphs}

A \emph{unicyclic} graph is a graph containing exactly one cycle \cite[p.41]{Harary}.
If $G$ is a unicyclic graph with $n$ vertices, then $G$ has $n$ edges.

\smallskip

Given $n\geq 3$, let $S_{2n}$ be the set of $n$-tuples ${\bf{x}} = (x_1, x_2, \dots ,x_{n-1},x_n)$  with $x_i\in \ZZ^+$ such that $x_1 \geq x_2 \geq  \cdots \geq  x_{n} $ and $\sum_{i=1}^{n} x_i = 2n$.

\begin{remark} \label{r:31}
Consider any unicyclic graph $G$ with $n$ vertices $v_1,\dots, v_n,$
ordered in such a way that if
${\bf{x}}={\bf{x}}_{_{G}}=(x_1,\dots,x_n)$ is the $n$-tuple where $x_i$ is the degree of the vertex $v_i$, then $x_{i}\geq x_{i+1}$ for every $1\leq i \leq n-1$.
By handshaking Lemma, we have that ${\bf{x}}\in S_{2n}$.
\end{remark}

Given any function $f : [1,\infty) \rightarrow \RR$, let us define the index
$$
I_f(G) = \sum_{u\in V(G)} f(d_u).
$$
Besides, if $f$ takes positive values, then we can define the index
$$
II_f(G) = \prod_{u\in V(G)} f(d_u).
$$

\begin{lemma} \label{l:min_max uni}
If $G$ is a unicyclic graph with $n \ge 4$ vertices, then
\[ (2,\dots,2) \prec {\bf{x}}_{_{G}} \! \prec (n-1,2,2,1,\dots, 1).\]
\end{lemma}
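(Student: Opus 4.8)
The plan is to use only three facts about the degree tuple ${\bf{x}}_{_{G}}=(x_1,\dots,x_n)$: it is arranged in non-increasing order, every $x_i\ge 1$ (since $G$ is connected and non-trivial), and $\sum_{i=1}^n x_i = 2n$ (handshaking Lemma together with the fact, recalled above, that a unicyclic graph on $n$ vertices has exactly $n$ edges). Since both comparison tuples also sum to $2n$, the equality of the full sums demanded by the definition of majorization is automatic in both cases, and it remains only to control the partial sums $\sum_{i=1}^k x_i$ for $1\le k\le n-1$.

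For the lower bound $(2,\dots,2)\prec {\bf{x}}_{_{G}}$ I would simply observe that $(2,\dots,2)$ is the constant tuple whose common value $2=2n/n$ is the average of the entries of ${\bf{x}}_{_{G}}$. Because ${\bf{x}}_{_{G}}$ is sorted in non-increasing order, the mean of its $k$ largest entries is at least the global mean, that is $\frac{1}{k}\sum_{i=1}^k x_i \ge \frac{1}{n}\sum_{i=1}^n x_i = 2$, which gives $\sum_{i=1}^k x_i \ge 2k$ for every $k$. This is the standard fact that a constant tuple is majorized by any tuple with the same sum, and it uses nothing beyond the value of the average.

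For the upper bound ${\bf{x}}_{_{G}}\prec {\bf{y}}$ with ${\bf{y}}=(n-1,2,2,1,\dots,1)$, I would first record the partial sums of ${\bf{y}}$: they are $n-1$, $n+1$, $n+3$ for $k=1,2,3$, and more generally $\sum_{i=1}^k y_i = n+k$ for every $3\le k\le n$. I then split the range of $k$ into three parts. For $3\le k\le n-1$ the bound is free: the $n-k$ smallest degrees each satisfy $x_i\ge 1$, so $\sum_{i=k+1}^n x_i \ge n-k$ and hence $\sum_{i=1}^k x_i = 2n - \sum_{i=k+1}^n x_i \le n+k$. For $k=1$ the inequality $x_1\le n-1$ is merely the bound on the maximum degree of a simple graph on $n$ vertices.

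The only step requiring a genuine idea is $k=2$, i.e.\ $x_1+x_2\le n+1$, and this is where I expect the real work (the naive tail bound used for $k\ge 3$ only yields $n+2$ here). Let $v_1,v_2$ realize the two largest degrees and let $e_{12}\in\{0,1\}$ indicate whether $v_1v_2$ is an edge. By inclusion--exclusion the number of distinct edges with an endpoint in $\{v_1,v_2\}$ equals $d_{v_1}+d_{v_2}-e_{12}=x_1+x_2-e_{12}$, and this cannot exceed the total number of edges $m=n$. Hence $x_1+x_2\le n+e_{12}\le n+1$, as needed. Assembling the three ranges gives ${\bf{x}}_{_{G}}\prec {\bf{y}}$ and completes the argument. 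I would also note, in case the extremal analysis is wanted afterwards, that equality in the $k=2$ step forces $v_1v_2$ to be an edge and every edge of $G$ to meet $\{v_1,v_2\}$, which singles out $K_{1,n-1}$ with one extra edge as the maximizer.
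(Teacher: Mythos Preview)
Your proof is correct, but the route you take for the upper bound at $k=1,2$ differs from the paper's. The paper exploits one structural fact you never invoke: since $G$ contains a cycle, at least three vertices have degree $\ge 2$, so $x_1\ge x_2\ge x_3\ge 2$. With this, the cases $k=1$ and $k=2$ become tail bounds just like $k\ge 3$: for $k=2$ one has $\sum_{i\ge 3}x_i\ge 2+(n-3)=n-1$, hence $x_1+x_2\le n+1$; for $k=1$ one has $\sum_{i\ge 2}x_i\ge 2+2+(n-3)=n+1$, hence $x_1\le n-1$. So what you call ``the only step requiring a genuine idea'' is, in the paper's treatment, no harder than the $k\ge 3$ case once $x_3\ge 2$ is observed. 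Your inclusion--exclusion on the edges incident to $\{v_1,v_2\}$ is a perfectly valid alternative and has the bonus of making the equality analysis transparent, but it is not needed; the paper's argument is the more economical one here. For the lower bound, your ``mean of the top $k$ entries'' argument is essentially the textbook fact that a constant tuple is majorized by any tuple with the same sum, whereas the paper argues by contradiction via integrality ($\sum_{i\le k}x_i<2k$ forces $x_k=1$, hence the full sum falls short of $2n$); both are equally short.
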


\begin{proof}
First of all, note that $(2,\dots,2)$ and $(n-1,2,2,1,\dots, 1)$ belong to $S_{2n}$.

Let us consider \emph{${\bf{x}}={\bf{x}}_{_{G}}=(x_1,\dots,x_n)$}.
Since $G$ contains a cycle, we have $x_1\geq x_2\geq x_3\geq 2$.

Seeking for a contradiction assume that $\sum_{i=1}^k x_i<2k$ for some $1\leq k\leq n-1$.
Thus, $x_k=1$ and
$$
\sum_{i=1}^n x_i
< 2k+n-k
= n+k
< 2n,
$$
a contradiction.
Hence,
$$
\sum_{i=1}^k x_i
\ge 2k
= \sum_{i=1}^k 2,
$$
for every $1\leq k\leq n-1$ and
$$
(2,\dots,2) \prec {\bf{x}} .
$$

Since
$$
\sum_{i=k+1}^n x_i
\ge \sum_{i=k+1}^n 1
= n-k,
$$
for any $3\leq k\leq n-1$, we have
$$
\sum_{i=1}^k x_i
= 2n -\sum_{i=k+1}^n x_i
\leq n+k
= n - 1 +2+2 +\sum_{i=4}^k 1 ,
$$
for every $3\leq k\leq n-1$ (where, as usual, we assume the convention $\sum_{i=4}^3 1 = 0$).

If $k=2$, then we have
$$
\begin{aligned}
\sum_{i=3}^n x_i
& = x_3 + \sum_{i=4}^n x_i
\geq 2 + \sum_{i=4}^n 1 = n-1,
\\
x_1 + x_2
& = 2n - \sum_{i=3}^n x_i
\leq 2n-(n-1)
= n-1+2.
\end{aligned}
$$

If $k=1$, then we have
$$
\begin{aligned}
\sum_{i=2}^n x_i
& = x_2 +x_3+ \sum_{i=3}^n x_i
\geq 2 + 2 + \sum_{i=4}^n 1 = n+1,
\\
x_1
& = 2n - \sum_{i=2}^n x_i
\leq 2n-(n+1)
= n-1.
\end{aligned}
$$

Therefore,
$$
{\bf{x}}  \prec (n-1,2,2,1,\dots, 1).
$$
\end{proof}

\begin{remark} \label{r:x}
If $G$ is a unicyclic graph with $3$ vertices then $G=C_3$, and we have
$I_f(G) = 3f(2)$ and $II_f(G) = f(2)^3$.
So, it suffices to deal with graphs of at lest $4$ vertices.
\end{remark}

%Lemma \ref{l:min_max uni} implies the following results.

\begin{theorem} \label{t:Ifuni}
If $G$ is a unicyclic graph with $n \ge 4$ vertices and $f : [1,\infty) \rightarrow \RR$ is a convex function, then
\[
nf(2) \leq I_f(G) \leq f(n-1) + 2f(2) +(n-3)f(1),
\]
and both inequalities are attained.
%Moreover, if $f$ is a strictly convex function, then
%the lower bound is attained if and only if $T$ is the path graph and the upper bound is attained if and only if $T$ is the star graph.
\end{theorem}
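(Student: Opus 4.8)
The plan is to combine Lemma \ref{l:min_max uni} with the Schur-convexity criterion recalled in Section 2. Observe first that $I_f(G) = \Phi({\bf{x}}_{_{G}})$, where $\Phi({\bf{x}}) = \sum_{i=1}^n f(x_i)$ and ${\bf{x}}_{_{G}}$ is the decreasingly ordered degree sequence of $G$ as in Remark \ref{r:31}. Since every vertex of a non-trivial graph has degree at least $1$, all entries of ${\bf{x}}_{_{G}}$ lie in $[1,\infty)$, so $f$ is defined at each of them; because $f$ is convex on $[1,\infty)$, the associated function $\Phi$ is Schur-convex.

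Next I would feed Lemma \ref{l:min_max uni} into this machinery. The lemma asserts
\[
(2,\dots,2) \prec {\bf{x}}_{_{G}} \prec (n-1,2,2,1,\dots,1),
\]
and applying the Schur-convexity of $\Phi$ to both majorization relations yields
\[
\Phi(2,\dots,2) \le \Phi({\bf{x}}_{_{G}}) \le \Phi(n-1,2,2,1,\dots,1).
\]
Evaluating the two extreme values is then direct: the all-$2$ tuple gives $\Phi(2,\dots,2) = n f(2)$, while the tuple $(n-1,2,2,1,\dots,1)$, which has a single entry equal to $n-1$, two entries equal to $2$, and $n-3$ entries equal to $1$, gives $\Phi(n-1,2,2,1,\dots,1) = f(n-1) + 2f(2) + (n-3)f(1)$. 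This produces exactly the asserted two-sided bound.

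Finally, for sharpness I would exhibit the extremal graphs directly. The lower bound is attained by the cycle $C_n$, whose every vertex has degree $2$, so that ${\bf{x}}_{_{C_n}} = (2,\dots,2)$ and $I_f(C_n) = n f(2)$. The upper bound is attained by the graph obtained from a triangle by attaching $n-3$ pendant vertices at one of its vertices: that vertex then has degree $n-1$, the other two triangle vertices have degree $2$, and the $n-3$ added vertices have degree $1$, so its degree sequence is precisely $(n-1,2,2,1,\dots,1)$ and $I_f$ equals the claimed upper value; this graph is unicyclic since it contains exactly the one cycle given by the triangle. The heavy lifting has already been carried out in Lemma \ref{l:min_max uni}, so no genuine obstacle remains: the only points requiring attention are purely bookkeeping, namely counting the $n-3$ entries equal to $1$ correctly and noting that the Schur-convexity criterion of Section 2 demands convexity only on $[1,\infty)$, which is exactly the range in which the degrees lie.
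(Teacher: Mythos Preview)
Your argument is correct and is exactly the route the paper takes: Lemma \ref{l:min_max uni} supplies the majorization chain, Section~2 gives the Schur-convexity of $\Phi({\bf{x}})=\sum_i f(x_i)$ from the convexity of $f$, and evaluating $\Phi$ at the two extremal tuples yields the bounds, with $C_n$ and $U_n^3$ (your triangle with $n-3$ pendants at one vertex) witnessing sharpness.
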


\begin{theorem} \label{t:If2uni}
If $G$ is a unicyclic graph with $n \ge 4$ vertices and $f : [1,\infty) \rightarrow \RR$ is a concave function, then
\[f(n-1) + 2f(2) +(n-3)f(1)\leq I_f(G) \leq n f(2),
\]
and both inequalities are attained.
%Moreover, if $f$ is a strictly concave function, then
%the lower bound is attained if and only if $T$ is the star graph and the upper bound is attained if and only if $T$ is the path graph.
\end{theorem}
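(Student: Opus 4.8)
The plan is to deduce both bounds directly from the majorization estimates in Lemma~\ref{l:min_max uni} together with the Schur-concavity criterion recorded in Section~2; the whole argument is the concave mirror image of the proof of Theorem~\ref{t:Ifuni}, with every inequality reversed.

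First I would write $I_f(G) = \sum_{i=1}^n f(x_i) = \Phi({\bf{x}}_{_{G}})$, where ${\bf{x}}_{_{G}} = (x_1,\dots,x_n)$ is the decreasingly ordered degree sequence of $G$ furnished by Remark~\ref{r:31}, and $\Phi({\bf{x}}) = \sum_{i=1}^n f(x_i)$. Since $f$ is concave on $[1,\infty)$, the discussion in Section~2 guarantees that $\Phi$ is Schur-concave; that is, ${\bf{x}} \succ {\bf{y}}$ forces $\Phi({\bf{x}}) \le \Phi({\bf{y}})$. Because ${\bf{x}}_{_{G}}$ and both bounding tuples are already sorted in decreasing order, they are legitimate inputs to the majorization relation.

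Next I would feed Lemma~\ref{l:min_max uni} into this criterion. From $(2,\dots,2) \prec {\bf{x}}_{_{G}}$ and Schur-concavity we obtain $I_f(G) = \Phi({\bf{x}}_{_{G}}) \le \Phi(2,\dots,2) = nf(2)$, which is the upper bound. From ${\bf{x}}_{_{G}} \prec (n-1,2,2,1,\dots,1)$ and Schur-concavity we obtain $f(n-1)+2f(2)+(n-3)f(1) = \Phi(n-1,2,2,1,\dots,1) \le \Phi({\bf{x}}_{_{G}}) = I_f(G)$, which is the lower bound. The order reversal built into Schur-concavity is precisely what sends the majorizing tuple $(n-1,2,2,1,\dots,1)$ to the lower bound and the majorized tuple $(2,\dots,2)$ to the upper bound, i.e.\ the opposite of the convex case.

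Finally, for attainment I would exhibit the two extremal graphs. The cycle $C_n$ is unicyclic with degree sequence $(2,\dots,2)$, so $I_f(C_n)=nf(2)$ realizes the upper bound. For the lower bound I would take the triangle $C_3$ and attach $n-3$ pendant vertices to a single vertex of it; this graph is unicyclic on $n$ vertices (exactly one cycle, and $n$ edges) with degree sequence $(n-1,2,2,1,\dots,1)$, so its index equals $f(n-1)+2f(2)+(n-3)f(1)$. Since each step is a direct invocation of the two cited results, I expect no genuine obstacle; the only point requiring care is confirming that the bounding tuples are realized by honest unicyclic graphs, which the two explicit constructions settle.
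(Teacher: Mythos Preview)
Your proof is correct and follows exactly the approach intended by the paper: invoke Lemma~\ref{l:min_max uni} together with the Schur-concavity of $\Phi({\bf{x}})=\sum_i f(x_i)$ from Section~2, and verify attainment via $C_n$ and the graph $U_n^3$ obtained from $C_3$ by attaching $n-3$ pendant edges at one vertex. The paper does not even write out the proof of Theorem~\ref{t:If2uni} separately, treating it as the immediate concave counterpart of Theorem~\ref{t:Ifuni}, which is precisely what you have done.
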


In a similar way, we obtain the following results,
since
$$
\log II_f(G) = \sum_{u\in V(G)} \log f(d_u),
$$
and the logarithm is an increasing function.

\begin{theorem} \label{t:IIfuni}
If $G$ is a unicyclic graph with $n \ge 4$ vertices and $f : [1,\infty) \rightarrow \RR^+$ is a function such that $\log f$ is convex, then
\[f(2)^n \leq II_f(G) \leq   f(n-1) f(2)^2 f(1)^{n-3}
\]
and both inequalities are attained.
%Moreover, if $\log f$ is a strictly convex function, then
%the lower bound is attained if and only if $T$ is the path graph and the upper bound is attained if and only if $T$ is the star graph.
\end{theorem}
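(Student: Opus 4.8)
The plan is to reduce the statement to Theorem~\ref{t:Ifuni} by passing to logarithms, exactly as the displayed identity preceding the statement suggests. I would set $g = \log f$, a real-valued function on $[1,\infty)$; by hypothesis $\log f$ is convex, so $g$ is convex. The key observation is that
$$
\log II_f(G) = \sum_{u\in V(G)} \log f(d_u) = \sum_{u\in V(G)} g(d_u) = I_g(G),
$$
so that controlling the multiplicative index $II_f(G)$ is equivalent to controlling the additive index $I_g(G)$ attached to the convex function $g$.

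Next I would apply Theorem~\ref{t:Ifuni} to $g$, which gives
$$
n\, g(2) \leq I_g(G) \leq g(n-1) + 2 g(2) + (n-3) g(1),
$$
with both inequalities attained. Rewriting each term via $g = \log f$ yields $n g(2) = \log\bigl(f(2)^n\bigr)$ and $g(n-1)+2g(2)+(n-3)g(1) = \log\bigl( f(n-1)\, f(2)^2 f(1)^{n-3}\bigr)$, so the chain is a two-sided bound on $\log II_f(G)$ between two logarithms. Since the exponential is strictly increasing, applying it to the whole chain preserves the inequalities and produces
$$
f(2)^n \leq II_f(G) \leq f(n-1)\, f(2)^2 f(1)^{n-3},
$$
as claimed.

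The only point requiring care is the attainment, and even this is pure bookkeeping: because $\exp$ is strictly increasing, equality in either bound for $II_f(G)$ holds precisely when equality holds in the corresponding bound for $I_g(G)$, so the extremal graphs are inherited verbatim from Theorem~\ref{t:Ifuni}. In particular the lower bound is attained by the cycle $C_n$ (degree sequence $(2,\dots,2)$) and the upper bound by the unicyclic graph with degree sequence $(n-1,2,2,1,\dots,1)$, these being the minimal and maximal elements for majorization on $S_{2n}$ by Lemma~\ref{l:min_max uni}. I do not expect any genuine obstacle here; the entire substance of the argument resides in Theorem~\ref{t:Ifuni}, and this result is a formal corollary obtained by the monotone change of variables $f \mapsto \log f$.
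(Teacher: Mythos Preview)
Your proposal is correct and follows exactly the approach the paper indicates: set $g=\log f$, use $\log II_f(G)=I_g(G)$ to reduce to Theorem~\ref{t:Ifuni}, and then exponentiate using the monotonicity of the exponential. The attainment discussion is also precisely what the paper has in mind, since the extremal graphs are inherited from Lemma~\ref{l:min_max uni} via Theorem~\ref{t:Ifuni}.
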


\begin{theorem} \label{t:IIf2uni}
If $G$ is a unicyclic graph with $n \ge 4$ vertices and $f : [1,\infty) \rightarrow \RR^+$ is a function such that $\log f$ is concave, then
\[f(n-1) f(2)^2 f(1)^{n-3} \leq II_f(G) \leq  f(2)^n,
\]
and both inequalities are attained.
%Moreover, if $\log f$ is a strictly concave function, then
%the lower bound is attained if and only if $T$ is the star graph and the upper bound is attained if and only if $T$ is the path graph.
\end{theorem}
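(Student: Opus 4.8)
The plan is to reduce this statement directly to Theorem \ref{t:If2uni} by applying the logarithm, exactly as suggested by the remark preceding Theorem \ref{t:IIfuni}. Set $g = \log f$, so that $g \colon [1,\infty) \to \RR$ is well defined because $f$ takes values in $\RR^+$. The hypothesis that $\log f$ is concave means precisely that $g$ is a concave function, and since the logarithm turns products into sums we have
\[
\log II_f(G) = \sum_{u \in V(G)} \log f(d_u) = \sum_{u \in V(G)} g(d_u) = I_g(G).
\]

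First I would apply Theorem \ref{t:If2uni} to the concave function $g$, obtaining
\[
g(n-1) + 2 g(2) + (n-3) g(1) \leq I_g(G) \leq n\, g(2).
\]
Rewriting $g = \log f$ and combining logarithms, the left-hand side equals $\log\!\big( f(n-1) f(2)^2 f(1)^{n-3} \big)$, the middle equals $\log II_f(G)$, and the right-hand side equals $\log\!\big( f(2)^n \big)$. Since the exponential is strictly increasing, applying it to all three expressions preserves the inequalities and yields
\[
f(n-1)\, f(2)^2 f(1)^{n-3} \leq II_f(G) \leq f(2)^n,
\]
which is the claimed chain of inequalities.

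Finally, for attainment I would observe that the extremal tuples are the same as in Theorem \ref{t:If2uni}, namely ${\bf{x}} = (2,\dots,2)$ for the upper bound and ${\bf{x}} = (n-1,2,2,1,\dots,1)$ for the lower bound, realized by the corresponding unicyclic graphs (the cycle $C_n$ and the triangle with $n-3$ pendant vertices attached at one of its vertices, respectively). Because $\exp$ is a strictly increasing bijection, equality in the transformed inequality holds if and only if equality holds in the original, so the graphs attaining the bounds in Theorem \ref{t:If2uni} are precisely those attaining them here. There is essentially no obstacle in this argument: the only point requiring care is to confirm that the strict monotonicity of $\log$ and $\exp$ transfers the attainment of equality faithfully, which is immediate.
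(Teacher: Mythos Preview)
Your proposal is correct and follows exactly the route indicated in the paper: the paper does not write out a separate proof for Theorem~\ref{t:IIf2uni} but simply observes that taking logarithms reduces it to Theorem~\ref{t:If2uni}, which is precisely what you do. Your added remarks on why equality transfers are accurate and fill in a detail the paper leaves implicit.
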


Recall that a vertex in a graph is \emph{pendant} if it has degree $1$.
An edge is \emph{pendant} if it contains a pendant vertex.

Let $U_n^3$ be the unicyclic graph obtained from the cycle $C_3$ by attaching $n - 3$ pendant edges to the same vertex on $C_3$.
Note that $U_n^3$ is the unique graph with degree sequence $(n-1,2,2,1,\dots, 1)$.

\smallskip

Since $t^{\a}$ is strictly convex if $\a\in (-\infty,0)\cup (1,\infty)$, Theorem \ref{t:Ifuni} allows to obtain the following result.

\begin{theorem} \label{t:m1uni}
If $G$ is a unicyclic graph with $n \ge 4$ vertices and $\a\in (-\infty,0)\cup (1,\infty)$, then
\[
n 2^\a \le M_1^{\a}(G)
\leq (n-1)^\a + 2^{\a+1} + n-3 .
\]
Moreover, the lower bound is attained if and only if $G$ is the cycle graph and the upper bound is attained if and only if
$G=U_n^3$.
\end{theorem}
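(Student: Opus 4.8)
The plan is to deduce the two inequalities directly from Theorem \ref{t:Ifuni}, applied with $f(t)=t^\a$. On $[1,\infty)$ the function $t^\a$ is strictly convex exactly when $\a\in(-\infty,0)\cup(1,\infty)$; in particular it is convex for such $\a$, so Theorem \ref{t:Ifuni} applies and gives
\[
nf(2)\le I_f(G)=M_1^\a(G)\le f(n-1)+2f(2)+(n-3)f(1).
\]
Since $f(1)=1$, $f(2)=2^\a$ and $f(n-1)=(n-1)^\a$, the left-hand side is $n\,2^\a$ and the right-hand side is $(n-1)^\a+2^{\a+1}+(n-3)$, which are precisely the bounds in the statement.

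For the equality characterization I would not use mere convexity but the \emph{strict} Schur-convexity recorded in Section 2: for the values of $\a$ under consideration, $\Phi({\bf{x}})=\sum_{i=1}^n x_i^\a$ is strictly Schur-convex, and $M_1^\a(G)=\Phi({\bf{x}}_{_{G}})$, where ${\bf{x}}_{_{G}}\in S_{2n}$ is the ordered degree sequence of $G$ by Remark \ref{r:31}. Lemma \ref{l:min_max uni} supplies the majorization chain $(2,\dots,2)\prec{\bf{x}}_{_{G}}\prec(n-1,2,2,1,\dots,1)$. Strict Schur-convexity then refines each inequality: from $(2,\dots,2)\prec{\bf{x}}_{_{G}}$ we get $\Phi({\bf{x}}_{_{G}})>\Phi((2,\dots,2))=n\,2^\a$ unless ${\bf{x}}_{_{G}}=(2,\dots,2)$, so the lower bound is attained if and only if every vertex of $G$ has degree $2$; and from ${\bf{x}}_{_{G}}\prec(n-1,2,2,1,\dots,1)$ we get $\Phi({\bf{x}}_{_{G}})<\Phi((n-1,2,2,1,\dots,1))$ unless ${\bf{x}}_{_{G}}=(n-1,2,2,1,\dots,1)$, so the upper bound is attained if and only if $G$ has degree sequence $(n-1,2,2,1,\dots,1)$.

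The last step is to translate these extremal degree sequences back into graphs. A connected graph all of whose vertices have degree $2$ is a single cycle, so the all-$2$'s sequence is realized among connected unicyclic graphs precisely by the cycle graph $C_n$; and the sequence $(n-1,2,2,1,\dots,1)$ is realized only by $U_n^3$, as noted in the remark preceding the theorem. I expect this identification to be the only point requiring a moment's care, but it is immediate, so there is no genuine obstacle: the entire force of the characterization comes from the strictness in the Schur-convexity of $t\mapsto t^\a$ for $\a\in(-\infty,0)\cup(1,\infty)$, together with the sharp majorization bounds of Lemma \ref{l:min_max uni}.
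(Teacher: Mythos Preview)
Your proposal is correct and follows essentially the same route as the paper: apply Theorem~\ref{t:Ifuni} with $f(t)=t^{\a}$, which is strictly convex on $[1,\infty)$ for $\a\in(-\infty,0)\cup(1,\infty)$. Your explicit unpacking of the equality cases via strict Schur-convexity and Lemma~\ref{l:min_max uni}, together with the identification of the extremal degree sequences with $C_n$ and $U_n^3$, fills in exactly what the paper leaves implicit.
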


The bounds in Theorem \ref{t:m1uni} are proved when $n\ge 7$ with a different argument in \cite{ZZ}.

\smallskip

Since $t^{\a}$ is strictly concave if $\a \in (0,1)$,  Theorem \ref{t:If2uni}
allows to obtain the following result.

\begin{theorem} \label{t:m1bisuni}
If $G$ is a unicyclic graph with $n \ge 4$ vertices and $\a\in (0,1)$, then
\[
(n-1)^\a + 2^{\a+1} + n-3 \le M_1^{\a}(G)
\leq n 2^\a .
\]
Moreover, the lower bound is attained if and only if $G=U_n^3$ and the upper bound is attained if and only if $G$ is the cycle graph.
\end{theorem}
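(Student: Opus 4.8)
The plan is to apply Theorem \ref{t:If2uni} with the choice $f(t)=t^{\a}$. Since $\a\in(0,1)$, the function $t^{\a}$ is strictly concave on $[1,\infty)$, so in particular $f$ is concave and Theorem \ref{t:If2uni} applies. Evaluating its bounds at this $f$, we have $f(n-1)=(n-1)^{\a}$, $f(2)=2^{\a}$ and $f(1)=1$, so the lower bound $f(n-1)+2f(2)+(n-3)f(1)$ becomes $(n-1)^{\a}+2^{\a+1}+n-3$ and the upper bound $nf(2)$ becomes $n2^{\a}$. This at once yields the stated chain of inequalities.

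For the equality cases I would invoke strict Schur-concavity. Because $t^{\a}$ is strictly concave for $\a\in(0,1)$, the function $\Phi({\bf{x}})=\sum_{i=1}^{n}x_i^{\a}$ is strictly Schur-concave, as recorded in Section 2. By Lemma \ref{l:min_max uni}, every unicyclic graph $G$ with $n\ge 4$ vertices satisfies $(2,\dots,2)\prec{\bf{x}}_{_{G}}\prec(n-1,2,2,1,\dots,1)$. Strict Schur-concavity then gives $\Phi({\bf{x}}_{_{G}})\ge\Phi(n-1,2,2,1,\dots,1)$ with equality if and only if ${\bf{x}}_{_{G}}=(n-1,2,2,1,\dots,1)$, and $\Phi({\bf{x}}_{_{G}})\le\Phi(2,\dots,2)$ with equality if and only if ${\bf{x}}_{_{G}}=(2,\dots,2)$.

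It remains to translate these equalities between degree sequences into statements about graphs. For the lower bound, $U_n^3$ is the unique graph with degree sequence $(n-1,2,2,1,\dots,1)$, as noted just before the theorem, so equality holds precisely when $G=U_n^3$. For the upper bound, the degree sequence $(2,\dots,2)$ forces every vertex of $G$ to have degree $2$; a connected graph all of whose vertices have degree $2$ is a cycle, and as $G$ is unicyclic with $n$ vertices this cycle is $C_n$. Hence equality in the upper bound holds precisely when $G$ is the cycle graph.

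The only genuinely delicate point is passing from the extremal degree sequences back to the graphs. The strict Schur-concavity argument pins down the extremal degree sequences uniquely, and the final step relies on the two uniqueness facts above (uniqueness of $U_n^3$ at the lower extreme, and that a connected $2$-regular graph is a cycle at the upper extreme) to conclude the corresponding characterization of extremal graphs.
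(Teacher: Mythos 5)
Your proposal is correct and follows essentially the same route as the paper: the paper derives this theorem directly from Theorem \ref{t:If2uni} with $f(t)=t^{\a}$, using the strict concavity of $t^{\a}$ for $\a\in(0,1)$, the majorization chain of Lemma \ref{l:min_max uni}, and the uniqueness of the extremal degree sequences (realized only by $U_n^3$ and $C_n$) to settle the equality cases. Nothing is missing.
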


The bounds in Theorem \ref{t:m1bisuni} are proved when $n\ge 7$ with a different argument in \cite{ZZ}.

\smallskip

Theorem \ref{t:m1uni} has the following consequences.

\begin{corollary} \label{l:m1uni}
If $G$ is a unicyclic graph with $n \ge 4$ vertices, then the following inequalities hold:
$$
\begin{aligned}
4n & \leq M_1(G)\leq n^2-n+6,
\\
8n & \leq F(G)\leq (n-1)^3+n+13,
\\
\frac{n}2 & \leq ID(G)\leq \frac1{n-1} +n-2.
\end{aligned}
$$
Moreover, each lower bound is attained if and only if $G=C_n$, and each upper bound is attained if and only if $G=U_n^3$.
\end{corollary}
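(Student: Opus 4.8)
The plan is to apply Theorem \ref{t:m1uni} directly with the three specific exponents corresponding to the three indices. First I would recall the identifications made in the introduction: $M_1 = M_1^2$, $F = M_1^3$ and $ID = M_1^{-1}$. The crucial observation is that each of the exponents $\a = 2$, $\a = 3$ and $\a = -1$ lies in the admissible range $(-\infty,0) \cup (1,\infty)$, so Theorem \ref{t:m1uni} applies verbatim to each of the three cases.

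Next I would substitute each value of $\a$ into the general bounds
\[
n 2^\a \le M_1^{\a}(G) \le (n-1)^\a + 2^{\a+1} + n - 3,
\]
and simplify. For the lower bounds the substitution is immediate: $n 2^2 = 4n$, $n 2^3 = 8n$ and $n 2^{-1} = n/2$. For the upper bounds one expands the expression $(n-1)^\a + 2^{\a+1} + n - 3$: for $\a = 2$ this gives $(n-1)^2 + 8 + n - 3 = n^2 - n + 6$; for $\a = 3$ it gives $(n-1)^3 + 16 + n - 3 = (n-1)^3 + n + 13$; and for $\a = -1$ it gives $\tfrac{1}{n-1} + 1 + n - 3 = \tfrac{1}{n-1} + n - 2$. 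These coincide exactly with the three pairs of inequalities in the statement.

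Finally, the characterization of the extremal graphs requires no extra work. Theorem \ref{t:m1uni} already asserts, for every admissible $\a$, that the lower bound is attained if and only if $G = C_n$ and the upper bound if and only if $G = U_n^3$. Since our three exponents are all admissible, these equality cases transfer directly to $M_1$, $F$ and $ID$. There is essentially no obstacle here beyond the elementary algebraic simplifications above; the entire content of the corollary is packaged in the already-proved Theorem \ref{t:m1uni}, and the only mild care needed is to verify the arithmetic of the three upper-bound expansions.
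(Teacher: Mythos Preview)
Your proposal is correct and follows exactly the approach the paper indicates: the paper simply states that this corollary is a consequence of Theorem~\ref{t:m1uni}, and your substitution of $\a=2,3,-1$ together with the elementary simplifications is precisely what is needed. The arithmetic in all three upper bounds checks out, and the extremal characterizations carry over directly from Theorem~\ref{t:m1uni} as you note.
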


%\begin{proof}
%Theorem \ref{t:m1uni} gives
%\[
%M_1(G)
%\geq \min \big\{ n 2^2, \,  4^2 + (n-3)2^2 +2 \big\}
%= \min \big\{ 4n , \, 4n+6 \big\}
%= 4n,
%\]
%and the argument in the proof of ??? gives that the equality is attained if and only if $G=C_n$.
%
%A similar argument allows to deal with the other inequalities.
%\end{proof}

\begin{corollary}
If $G$ is a unicyclic graph with $n$ vertices and $\a<1$, then
$$
M_1^{\a}(G) = O(n).
$$
\end{corollary}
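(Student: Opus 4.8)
The plan is to read the corollary off directly from Theorems \ref{t:m1uni} and \ref{t:m1bisuni}, splitting into cases according to the sign of $\a$. Since $d_u^\a > 0$ for every vertex, $M_1^\a(G)$ is positive, so the only thing to produce is an upper bound that is linear in $n$.

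For $\a \in (0,1)$ I would invoke Theorem \ref{t:m1bisuni}, whose upper bound is $M_1^\a(G) \le n 2^\a$; as $2^\a < 2$ on this range, this already gives $M_1^\a(G) < 2n = O(n)$. For $\a < 0$ I would instead use the upper bound of Theorem \ref{t:m1uni}, namely $M_1^\a(G) \le (n-1)^\a + 2^{\a+1} + n - 3$. The one point needing attention is the term $(n-1)^\a$: since $\a < 0$ and $n - 1 \ge 3$ for $n \ge 4$, it satisfies $(n-1)^\a < 1$ (in fact it tends to $0$), while $2^{\a+1} < 2$, so the whole bound is strictly below $1 + 2 + (n-3) = n$; thus $M_1^\a(G) < n = O(n)$ here as well. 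The remaining boundary value $\a = 0$ is covered by neither theorem but is immediate, since $d_u^0 = 1$ gives $M_1^0(G) = n$; and the excluded small case $n = 3$ only yields the constant $M_1^\a(C_3) = 3 \cdot 2^\a$. Combining, $M_1^\a(G) \le 2n$ for all admissible $n$ and all $\a < 1$.

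I do not anticipate a genuine obstacle: the entire argument is the observation that the sharp upper bounds already established in the two preceding theorems are $O(n)$. The only subtlety worth flagging is that one must check the apparently nonlinear summand $(n-1)^\a$ remains bounded in the regime $\a < 0$, rather than contributing a superlinear term; once this is noted, the constant $2$ in $M_1^\a(G) \le 2n$ works uniformly across all three cases.
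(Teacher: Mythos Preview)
Your argument is correct and matches the paper's intent: the corollary is stated there without explicit proof, as a direct consequence of the upper bounds in Theorems \ref{t:m1uni} and \ref{t:m1bisuni}, and your case split according to the sign of $\a$, together with the observation that $(n-1)^\a$ stays bounded for $\a<0$, is exactly how one reads it off.
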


\smallskip

In 2011, Vuki\v{c}evi\'c \cite{Vuki4} proposed the following topological index (and named it
as the \emph{variable sum exdeg index}) for predicting the octanol-water partition coefficient of certain chemical compounds
$$
SEI_a(G)
= \sum_{uv\in E(G)} \big(a^{d_u} + a^{d_v}\big)
= \sum_{u\in V(G)} d_u a^{d_u} ,
$$
where $a \neq 1$ is a positive real number.
Among the set of $102$ topological indices \cite{http} proposed by the International Academy of Mathematical Chemistry \cite{http0}
(respectively, among the discrete Adriatic indices \cite{VG}), the best topological index for predicting the octanol-water
partition coefficient of octane isomers has $0.29$ (respectively $0.36$) coefficient of determination.
The variable sum exdeg index allows to obtain the coefficient of determination $0.99$, for predicting the aforementioned property of octane isomers \cite{Vuki4}.
Therefore, it is interesting to study the mathematical properties of the variable sum exdeg index.
Vuki\v{c}evi\'c initiated the mathematical study of $SEI_a$ in \cite{Vuki5}.

\smallskip

If we define $f(t)=t a^{t}$, then $f''(t)=2 a^{t}\log a + t a^{t}(\log a)^2$.
Hence, $f$ is strictly convex on $[1,\infty)$ if either $a>1$ or $a\le e^{-2}$, and Theorem \ref{t:Ifuni} allows to obtain the following result.

\begin{theorem} \label{t:SEI1}
If $G$ is a unicyclic graph with $n \ge 4$ vertices and $a>1$ or $0<a\le e^{-2}$, then
\[
n 2a^2 \le SEI_a(G)
\leq (n-1)a^{n-1} + 4a^{2} + (n-3)a .
\]
Moreover, the lower bound is attained if and only if $G$ is the cycle graph and the upper bound is attained if and only if
$G=U_n^3$.
\end{theorem}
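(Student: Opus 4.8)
The plan is to recognize $SEI_a$ as an instance of the index $I_f$ and then invoke Theorem \ref{t:Ifuni} together with the strict Schur-convexity machinery of Section 2. Writing $f(t) = t a^{t}$, the second expression for $SEI_a$ gives $SEI_a(G) = \sum_{u\in V(G)} f(d_u) = I_f(G)$. The excerpt has already computed $f''(t) = 2a^{t}\log a + t a^{t}(\log a)^2$ and observed that $f$ is strictly convex on $[1,\infty)$ precisely when $a>1$ or $0<a\le e^{-2}$; these are exactly the hypotheses under which $f$ qualifies as a convex function for Theorem \ref{t:Ifuni}.

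First I would apply Theorem \ref{t:Ifuni} to obtain $n f(2) \le I_f(G) \le f(n-1) + 2f(2) + (n-3) f(1)$. Substituting $f(1) = a$, $f(2) = 2a^2$ and $f(n-1) = (n-1) a^{n-1}$ turns this into $2na^2 \le SEI_a(G) \le (n-1) a^{n-1} + 4a^2 + (n-3) a$, which is exactly the claimed pair of inequalities (note that $n f(2) = 2na^2$ and $2 f(2) = 4a^2$).

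For the equality characterization I would not merely cite the attainment clause of Theorem \ref{t:Ifuni}, but argue directly from strict Schur-convexity. Since $f$ is strictly convex, the discussion in Section 2 shows that $\Phi({\bf{x}}) = \sum_i f(x_i)$ is strictly Schur-convex. By Lemma \ref{l:min_max uni}, every unicyclic $G$ with $n\ge 4$ satisfies $(2,\dots,2) \prec {\bf{x}}_{_{G}} \prec (n-1,2,2,1,\dots,1)$, so strict Schur-convexity yields both bounds together with the statements that the lower bound is an equality if and only if ${\bf{x}}_{_{G}} = (2,\dots,2)$ and the upper bound is an equality if and only if ${\bf{x}}_{_{G}} = (n-1,2,2,1,\dots,1)$. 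The only remaining step is to translate these extremal degree sequences back into graphs: the sequence $(2,\dots,2)$ forces every vertex of the connected unicyclic graph $G$ to have degree $2$, identifying $G$ as the cycle $C_n$, while, as noted just before the statement, $U_n^3$ is the unique graph realizing $(n-1,2,2,1,\dots,1)$.

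I expect no serious obstacle here, since the whole theorem is a corollary of Theorem \ref{t:Ifuni} and Lemma \ref{l:min_max uni} once the convexity of $f$ is in hand. The one point requiring care is that the \emph{if and only if} statements genuinely need the \emph{strict} convexity of $f$: bare convexity, as used for the inequalities themselves, would only guarantee that the bounds are attained, not that the extremizers are unique. I would therefore emphasize that the hypotheses $a>1$ or $0<a\le e^{-2}$ are exactly those ensuring $f''>0$ on $(1,\infty)$, and hence strict Schur-convexity of $\Phi$, which is what upgrades the attainment clause into the sharp characterization of $C_n$ and $U_n^3$ as the unique extremal graphs.
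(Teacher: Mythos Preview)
Your proposal is correct and follows essentially the same route as the paper: define $f(t)=t a^{t}$, note $f''(t)=2a^{t}\log a+t a^{t}(\log a)^{2}$ is nonnegative on $[1,\infty)$ precisely when $a>1$ or $0<a\le e^{-2}$, and then apply Theorem \ref{t:Ifuni}. Your more explicit treatment of the equality cases via strict Schur-convexity and Lemma \ref{l:min_max uni} is exactly the mechanism the paper uses implicitly when it invokes strict convexity (as it does for the analogous Theorems \ref{t:m1uni} and \ref{t:m1bisuni}).
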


Theorem \ref{t:SEI1} was proved recently in \cite{DimitrovAli}.

\smallskip

The \emph{Narumi-Katayama index} is defined in \cite{NK} as
$$
NK(G) = \prod_{u\in V (G)} d_u .
$$
The \emph{multiplicative second Zagreb index} or \emph{modified Narumi-Katayama index}
$$
NK^*(G) = \prod_{uv\in E (G)} d_u d_v = \prod_{u\in V (G)} d_u^{d_u}
$$
was introduced in \cite{multZ2} and \cite{GSG}.

\smallskip

Since $t \log t$ is a strictly convex function and $\log t$ is a strictly concave function, theorems \ref{t:IIfuni} and \ref{t:IIf2uni} imply, respectively, the following results.

\begin{theorem} \label{t:p2uni}
If $G$ is a unicyclic graph with $n \ge 4$, then
\[
4^{n} \leq NK^*(G)\leq 16(n-1)^{n-1}.
\]
Moreover, the lower bound is attained if and only if $G=C_n$ and the upper bound is attained if and only if $G=U_n^3$.
\end{theorem}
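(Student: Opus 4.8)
The plan is to recognize $NK^*$ as a multiplicative index $II_f$ and to apply Theorem \ref{t:IIfuni} directly. Concretely, setting $f(t) = t^t$, the identity $NK^*(G) = \prod_{u\in V(G)} d_u^{d_u}$ shows that $NK^*(G) = II_f(G)$. I would then verify the hypothesis of Theorem \ref{t:IIfuni}, namely that $\log f$ is (strictly) convex on $[1,\infty)$: since $\log f(t) = t\log t$ and $(t\log t)'' = 1/t > 0$, this is immediate.

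With this $f$ in hand, Theorem \ref{t:IIfuni} yields the two bounds after a routine evaluation: the lower bound is $f(2)^n = (2^2)^n = 4^n$, and the upper bound is $f(n-1)\, f(2)^2\, f(1)^{n-3} = (n-1)^{n-1}\cdot 4^2 \cdot 1^{n-3} = 16(n-1)^{n-1}$. This establishes the inequalities $4^n \le NK^*(G) \le 16(n-1)^{n-1}$.

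For the equality cases I would return to the Schur-convexity mechanism behind Theorem \ref{t:IIfuni}. Writing $\Phi({\bf{x}}) = \sum_{i} x_i \log x_i$, the strict convexity of $t\log t$ makes $\Phi$ strictly Schur-convex, and $\log II_f(G) = \Phi({\bf{x}}_{_{G}})$. By Lemma \ref{l:min_max uni} we have $(2,\dots,2) \prec {\bf{x}}_{_{G}} \prec (n-1,2,2,1,\dots,1)$, so strict Schur-convexity forces $\Phi$ (hence $NK^*$, via the increasing exponential) to attain the lower bound only when ${\bf{x}}_{_{G}} = (2,\dots,2)$ and the upper bound only when ${\bf{x}}_{_{G}} = (n-1,2,2,1,\dots,1)$. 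Finally, I would translate these degree sequences back to graphs: the only unicyclic graph all of whose vertices have degree $2$ is the cycle $C_n$, and $U_n^3$ is (as already noted) the unique graph with degree sequence $(n-1,2,2,1,\dots,1)$. This gives the claimed ``if and only if'' characterizations.

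I do not expect a serious obstacle here; the content is essentially bookkeeping once the reduction to Theorem \ref{t:IIfuni} is made. The only point requiring a little care is the uniqueness in the extremal characterization: Theorem \ref{t:IIfuni} as stated only asserts that the bounds are attained, so to upgrade ``attained'' to ``attained if and only if $G=C_n$ (resp.\ $U_n^3$)'' one must invoke the \emph{strict} Schur-convexity coming from the strict convexity of $t\log t$, rather than merely quoting the inequality.
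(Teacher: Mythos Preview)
Your proposal is correct and matches the paper's own argument essentially line for line: the paper derives Theorem~\ref{t:p2uni} from Theorem~\ref{t:IIfuni} by taking $f(t)=t^t$, noting that $\log f(t)=t\log t$ is strictly convex, and using this strictness (together with the uniqueness of the extremal degree sequences from Lemma~\ref{l:min_max uni}) to obtain the ``if and only if'' characterizations. Your explicit remark that strict Schur-convexity is what upgrades attainment to uniqueness is exactly the point the paper leaves implicit.
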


%\begin{proof}
%Since the logarithm is a strictly increasing function, a tree is extremal for $NK^*$ if and only if it is extremal for
%$$
%\log NK^* (T)
%= \sum_{u\in V (G)} d_u \log d_u .
%$$
%Since $t \log t$ is a strictly convex function, $\log NK^*$ is strictly Schur-convex, and so, Lemma \ref{l:min_max uni} implies the result.
%\end{proof}

%\medskip
%
%The argument in the proof of Theorem \ref{t:p2} gives the following result.

\begin{theorem} \label{t:p1uni}
If $G$ is a unicyclic graph with $n \ge 4$, then
\[
4(n-1) \leq NK(G)\leq 2^{n}.
\]
Moreover, the lower bound is attained if and only if $G=U_n^3$ and the upper bound is attained if and only if $G=C_n$.
\end{theorem}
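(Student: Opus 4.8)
The plan is to derive Theorem \ref{t:p1uni} as a direct specialization of Theorem \ref{t:IIf2uni}. First I would observe that $NK(G) = \prod_{u \in V(G)} d_u = II_f(G)$ for the choice $f(t) = t$, and that $\log f(t) = \log t$ has second derivative $-1/t^2 < 0$ on $[1,\infty)$, so $\log f$ is strictly concave. In particular the hypothesis of Theorem \ref{t:IIf2uni} is satisfied, and substituting $f(t) = t$ into its two bounds gives $f(n-1) f(2)^2 f(1)^{n-3} = (n-1)\cdot 4 \cdot 1 = 4(n-1)$ and $f(2)^n = 2^n$, which are exactly the claimed inequalities.

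For the equality cases I would argue through strict Schur-concavity. Since $\log t$ is strictly concave, the function $\Phi({\bf{x}}) = \sum_i \log x_i$ is strictly Schur-concave, and $\log NK(G) = \Phi({\bf{x}}_{_{G}})$. By Lemma \ref{l:min_max uni} we have $(2,\dots,2) \prec {\bf{x}}_{_{G}} \prec (n-1,2,2,1,\dots,1)$, so strict Schur-concavity yields
\[
\log(n-1) + 2\log 2 = \Phi(n-1,2,2,1,\dots,1) \leq \Phi({\bf{x}}_{_{G}}) \leq \Phi(2,\dots,2) = n\log 2,
\]
with equality on the left precisely when ${\bf{x}}_{_{G}} = (n-1,2,2,1,\dots,1)$ and on the right precisely when ${\bf{x}}_{_{G}} = (2,\dots,2)$.

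Finally I would translate the extremal degree sequences back into graphs. The only unicyclic graph whose degree sequence is $(2,\dots,2)$ is the cycle $C_n$, which therefore realizes the upper bound $2^n$; and, as already noted in the text, $U_n^3$ is the unique graph with degree sequence $(n-1,2,2,1,\dots,1)$, which realizes the lower bound $4(n-1)$. The one point requiring care is the orientation of the bounds: because $\Phi$ is Schur-\emph{concave}, the most balanced sequence $(2,\dots,2)$ \emph{maximizes} $NK$ (consistent with the AM--GM heuristic that a product of degrees with fixed sum is largest when the degrees are as equal as possible), while the most spread-out sequence $(n-1,2,2,1,\dots,1)$ minimizes it. Beyond this bookkeeping no genuine obstacle arises, since all the analytic content has already been isolated in Theorem \ref{t:IIf2uni} and Lemma \ref{l:min_max uni}.
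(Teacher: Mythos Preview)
Your proposal is correct and follows exactly the paper's own route: the paper derives Theorem \ref{t:p1uni} from Theorem \ref{t:IIf2uni} by noting that $\log t$ is strictly concave, and your argument is a faithful (and slightly more detailed) unpacking of that implication, including the equality characterization via strict Schur-concavity and the uniqueness of the extremal degree sequences.
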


Theorems \ref{t:p2uni} and \ref{t:p1uni} were proved in \cite{GSG} and \cite{GG}, respectively, with different arguments.

\section{Unicyclic graphs with maximum degree $\D$}

Let $S_{2n}^\D$ be the set  of $n$-tuples ${\bf{x}}\in S_{2n}$ such that $x_1=\D$. Note that if $G$ is a unicyclic graph with $n$ vertices and maximum degree $\D$ and ${\bf{x}}_{_G}$ is its degree sequence, then ${\bf{x}}_{_G}\in S_{2n}^\D$.

\smallskip

If $G$ is a unicyclic graph with $n $ vertices and maximum degree 2, then $G$ is the cycle $C_n$.

\smallskip

\begin{lemma}\label{l:min uni D}
If $G$ is a unicyclic graph with $n \ge 4$ vertices and maximum degree $\D\geq 3$ and
${\bf{y}}=(y_1,y_2,\dots, y_n)$ is such that
\begin{itemize}
	\item $y_1=\D$,
	\item $y_{j}= 2$ for every $1<j\leq  n-\D+2$,
	\item $y_{j}=1$ for every $n-\D+2< j \leq  n,$
\end{itemize}
then
\[ \bf{y} \prec {\bf{x}}_{_{G}}.\]
\end{lemma}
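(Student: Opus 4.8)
The plan is to verify the majorization ${\bf y} \prec {\bf x}_{_G}$ directly from the definition. Writing $S_k = \sum_{i=1}^k x_i$ and $T_k = \sum_{i=1}^k y_i$ for the partial sums of ${\bf x}_{_G}$ and ${\bf y}$, I must check $S_k \ge T_k$ for $1 \le k \le n-1$ together with $S_n = T_n$. The equality of total sums is immediate once one confirms ${\bf y}\in S_{2n}^\D$: it is nonincreasing by construction, and counting $n-\D+1$ entries equal to $2$ and $\D-2$ entries equal to $1$ gives $\D + 2(n-\D+1) + (\D-2) = 2n$. A short computation records the targets $T_1 = \D$, $T_k = \D + 2(k-1)$ for $1 \le k \le n-\D+2$, and $T_k = n+k$ for $n-\D+2 \le k \le n$. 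I would then treat the ranges $k=1$, $2\le k\le n-\D+2$, and $n-\D+2<k\le n-1$ separately; since $\D\ge 3$ forces $n-\D+2\le n-1$, these ranges cover all relevant $k$. For $k=1$ the inequality is the equality $x_1=\D=y_1$.

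For the middle range $2\le k\le n-\D+2$ I would argue by contradiction, adapting the argument used in Lemma \ref{l:min_max uni}. Suppose $S_k < \D + 2(k-1)$ for some such $k$; since $x_1=\D$, this says $\sum_{i=2}^k x_i < 2(k-1)$. Because the $x_i$ are nonincreasing positive integers, this forces $x_k=1$ (if $x_k\ge 2$ then $x_2,\dots,x_k\ge 2$ and the sum would be at least $2(k-1)$), hence $x_k=x_{k+1}=\cdots=x_n=1$. Substituting into $\sum_{i=1}^n x_i = 2n$ and using $\sum_{i=2}^{k-1}x_i \le 2k-4$ yields $2n \le \D + (2k-4) + (n-k+1) = \D+n+k-3$, i.e. $k\ge n-\D+3$, contradicting $k\le n-\D+2$.

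For the upper range $n-\D+2<k\le n-1$ the point is that ${\bf x}_{_G}$ must end in enough $1$'s, and the key structural fact is that a unicyclic graph with maximum degree $\D$ has at least $\D-2$ pendant vertices. Writing $p$ for their number and bounding by handshaking, $2n = \sum_u d_u \ge \D + 2(n-p-1) + p$ (the vertex of degree $\D$, the remaining $n-p-1$ nonpendant vertices of degree at least $2$, and the $p$ pendants), which simplifies to $p\ge \D-2$. Hence $x_i=1$ for all $i>n-\D+2$, so for $k$ in this range $\sum_{i=k+1}^n x_i = n-k$ and $S_k = 2n-(n-k) = n+k = T_k$, exactly as needed. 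Combining the three ranges gives $S_k\ge T_k$ for all $k$, proving ${\bf y}\prec {\bf x}_{_G}$.

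I expect the middle range to be the main obstacle. The handshaking count is routine and the upper range is essentially forced by it, but the contradiction argument must be arranged so that the integer bound on $\sum_{i=2}^{k-1}x_i$ produces precisely the threshold $k=n-\D+3$, pinning the break point of ${\bf y}$ at $k=n-\D+2$. Aligning this boundary with the definition of ${\bf y}$ is where the care lies.
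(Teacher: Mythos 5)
Your proof is correct and follows essentially the same route as the paper: a direct verification of the partial-sum inequalities, with the middle range $2\le k\le n-\D+2$ handled by the same handshaking contradiction. The only cosmetic difference is in the tail range, where the paper simply propagates $S_{n-\D+2}\ge T_{n-\D+2}$ forward using $x_i\ge 1$, while you prove the stronger fact that $x_i=1$ there via the pendant-vertex count; both work.
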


\begin{proof} First, note that ${\bf{y}} \in S_{2n}^\D$. Suppose \emph{${\bf{x}}={\bf{x}}_{_{G}}=(x_1,\dots,x_n)$}.
Since $G$ contains a cycle, we have $\D=x_1\geq x_2\geq x_3\geq 2$.

Seeking for a contradiction assume that $\sum_{i=1}^k x_i<\D+2(k-1)$ for some $2\leq k\leq n-\D+2$.
Thus, $x_k=1$ and
$$
\sum_{i=1}^n x_i
< \D+2k-2+n-k
= \D+n+k-2
\leq 2n,
$$
a contradiction.
Hence,
$$
\sum_{i=1}^k x_i
\ge \D+2k-2,
$$
for every $2\leq k\leq n-\D+2$, and it is immediate to check that
$$
{\bf{y}} \prec {\bf{x}}.
$$
\end{proof}

As usual, we denote by $\lfloor t \rfloor$ the lower integer part of $t\in\RR$, i.e., the greatest integer less than or equal to $t$.

\begin{lemma}\label{l:max uni D}
Let $G$ be a unicyclic graph with $n \ge 4$ vertices and maximum degree $\D\geq 3$ and $q=\big\lfloor \frac{n}{\D-1}\big\rfloor$.

If $q=1$ or $n=2\D-2$, let $s=n-\D+1$
and then \[  {\bf{x}}_{_{G}} \! \prec {\bf{z}}=(\D,s,2,1,\dots,1) .\]

If $q\geq 2$ and $n \neq 2\D-2$, let $r=n-q(\D-1)+1$ and
${\bf{z}}=(z_1,z_2,\dots, z_n)$ be such that
\begin{itemize}
	\item $z_{j}=\D$, for every $1\leq j\leq q$,
	\item $z_{j}= r$ if $j=q+1$,
	\item $z_{j}=1$ for every $q+1< j \leq n$,
\end{itemize}
and then
\[  {\bf{x}}_{_{G}} \prec \bf{z} .\]
\end{lemma}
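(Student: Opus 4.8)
The plan is to establish each majorization directly from the definition by verifying the partial-sum inequalities $\sum_{i=1}^{k} x_i \le \sum_{i=1}^{k} z_i$ for $1\le k\le n-1$, where ${\bf{x}}={\bf{x}}_{_{G}}=(x_1,\dots,x_n)$; equality of the total sums is then automatic once we check ${\bf{z}}\in S_{2n}^\D$, since ${\bf{x}}\in S_{2n}^\D$ as well. The only structural facts about ${\bf{x}}$ that I will use are $x_i\le \D$ for all $i$ (maximum degree), $x_i\ge 1$ for all $i$ (connectedness), and $x_1\ge x_2\ge x_3\ge 2$ (the cycle forces at least three vertices of degree $\ge 2$). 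The argument mirrors the upper-bound half of Lemma \ref{l:min_max uni}: bound the tail $\sum_{i=k+1}^{n} x_i$ from below and convert it to an upper bound on $\sum_{i=1}^{k} x_i$ using $\sum_{i=1}^{n} x_i = 2n$.

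Before the inequalities I would confirm that ${\bf{z}}$ is a legitimate element of $S_{2n}^\D$, which is also what dictates the two cases. When $q\ge 2$ and $n\neq 2\D-2$, the defining bounds $q(\D-1)\le n<(q+1)(\D-1)$ of $q=\lfloor n/(\D-1)\rfloor$ give $1\le r=n-q(\D-1)+1<\D$, so the entries of ${\bf{z}}$ are weakly decreasing and sum to $q\D+r+(n-q-1)=2n$. When $q=1$ or $n=2\D-2$ one has $n\le 2\D-2$, whence $2\le s=n-\D+1\le \D$ (the lower bound using $n\ge \D+1$, which holds since a vertex of degree $\D$ has $\D$ distinct neighbours), so $(\D,s,2,1,\dots,1)$ is weakly decreasing and sums to $2n$. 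The reason the two configurations differ is precisely the cycle constraint: the \emph{unconstrained} majorization-largest sequence in $S_{2n}^\D$ packs $\D$'s at the front and $1$'s at the tail, and this already has three entries $\ge 2$ exactly when $q\ge 2$ and $n\neq 2\D-2$; otherwise it leaves only two entries $\ge 2$, so one unit must be shifted down into a third coordinate to respect $x_3\ge 2$.

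For the case $q\ge 2$, $n\neq 2\D-2$, the inequalities split cleanly at $k=q$. For $1\le k\le q$ I would use only $x_i\le \D$, giving $\sum_{i=1}^{k} x_i\le k\D=\sum_{i=1}^{k} z_i$. For $q+1\le k\le n-1$ I would use only $x_i\ge 1$ on the tail, so $\sum_{i=k+1}^{n} x_i\ge n-k$ and hence $\sum_{i=1}^{k} x_i\le n+k$; a direct computation gives $\sum_{i=1}^{k} z_i=q\D+r+(k-q-1)=n+k$, matching exactly. Thus the cycle hypothesis is not even needed here, and ${\bf{z}}$ is simply the top-heavy extremal sequence of $S_{2n}^\D$.

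For the case $q=1$ or $n=2\D-2$, the same tail estimate $\sum_{i=k+1}^{n} x_i\ge n-k$ handles every $k\ge 3$, giving $\sum_{i=1}^{k} x_i\le n+k=\sum_{i=1}^{k} z_i$, while $k=1$ is trivial. The single delicate step is $k=2$: here I would invoke $x_3\ge 2$ together with $x_i\ge 1$ to get $\sum_{i=3}^{n} x_i\ge 2+(n-3)=n-1$, so that $x_1+x_2\le n+1=\D+s=z_1+z_2$. I expect this $k=2$ estimate, the one place where the cycle constraint is indispensable, to be the crux: it is what rules out the naive bound $x_1+x_2\le 2\D$ (too weak because $n\le 2\D-2$) and forces the demoted profile $(\D,s,2,1,\dots,1)$. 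Once all partial sums are verified in both cases, the two majorizations follow.
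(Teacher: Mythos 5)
Your proof is correct and follows essentially the same route as the paper's: in both cases you convert the tail bound $\sum_{i=k+1}^{n}x_i\ge n-k$ into $\sum_{i=1}^{k}x_i\le n+k$ for large $k$, use $x_i\le\D$ for $k\le q$ in the second case, and use $x_3\ge 2$ at $k=2$ in the first case, exactly as the paper does (it leaves those last steps as ``readily seen''). Your additional explicit verification that ${\bf{z}}\in S_{2n}^\D$, via $1\le r<\D$ and $2\le s\le\D$, only fills in details the paper asserts without computation.
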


\begin{proof}
Suppose $q=1$ or $n=2\D-2$.
Note that $(\D,s,2,1,\dots,1),{\bf{z}}\in  S_{2n}^\D$.
Since for any $k$ with $3\leq k\leq n-1$,
$$
\sum_{i=k+1}^n x_i
\ge \sum_{i=k+1}^n 1
= n-k,
$$
we have that
$$
\sum_{i=1}^k x_i
= 2n -\sum_{i=k+1}^n x_i
\leq n+k
= \D+s+2+\sum_{i=4}^k 1,
$$
for every $3\leq k\leq n-1$.
Therefore, since $\D=x_1\geq x_2\geq x_3\geq 2$, it is readily seen that
$$
{\bf{x}}  \prec (\D,s,2,1,\dots, 1).
$$

Suppose $q\geq 2$ and $n \neq 2\D-2$.
Note that ${\bf{z}}\in S_{2n}^\D$.
Since for any $k$ with $q+1\leq k\leq n-1$,
$$
\sum_{i=k+1}^n x_i
\ge \sum_{i=k+1}^n 1
= n-k,
$$
we have that
$$
\sum_{i=1}^k x_i
= 2n -\sum_{i=k+1}^n x_i
\leq n+k
= q\D+r+\sum_{i=q+2}^k 1,
$$
for every $q+1\leq k\leq n-1$. Since $\D=x_1\geq x_i$ for every $i>1$, it is immediate to check that
$$
{\bf{x}}  \prec {\bf{z}}.
$$
\end{proof}

\smallskip

For any $n\geq 4$ and $3\leq \D\leq n-1$,
let $\mathcal{H}_n^\D$ be the set of graphs obtained from the cycle $C_{k}$ with $3 \le k \le n-\D+2$ by attaching to the same vertex of the cycle $\D-2$ path graphs with lengths $m_1,m_2,\dots,m_{\D-2}\ge 0$ satisfying $k+m_1+m_2+\dots +m_{\D-2} = n$.
Note that $G \in \mathcal{H}_n^\D$ if and only if it is a unicyclic graph with degree sequence ${\bf{y}}$.

Let $\mathcal{K}_n^\D$ be the set of unicyclic graphs with degree sequence ${\bf{z}}$.
We show now that $\mathcal{K}_n^\D \neq \emptyset$.
Let $q=\big\lfloor \frac{n}{\D-1}\big\rfloor$.
If $q=1$ or $n=2\D-2$, then let $K_n^\D$ be the graph obtained from the cycle $C_3$ by attaching $\D-2$ pendant vertices to some vertex and $n-1-\D$ to other.
Note that if $q=1$ or $n=2\D-2$, then $n-1-\D\leq \D-2$ and $\mathcal{K}_n^\D =\{K_n^\D\}$.
If $q>1$, $n \neq 2\D-2$ and $r=n-q(\D-1)+1=1$, then $q \neq 2$;
let $K_n^\D$ be the graph obtained from the cycle $C_q$ by attaching $\D-2$ pendant vertices to each vertex.
If $q>1$, $n \neq 2\D-2$ and $r=n-q(\D-1)+1\geq 2$, let $K_n^\D$ be the graph obtained from the cycle $C_{q+1}$ by attaching $\D-2$ pendant vertices to each vertex on the cycle except one and $r-2$ pendant vertices to this last vertex.
Thus, $K_n^\D \in \mathcal{K}_n^\D \neq \emptyset$ in any case.

\smallskip

Therefore, by lemmas \ref{l:min uni D} and \ref{l:max uni D}, we obtain the following.

\begin{theorem}\label{t:Ifuni_minD}
If $G$ is a unicyclic graph with $n \ge 4$ vertices, maximum degree $\D\geq 3$ and $f : [1,\infty) \rightarrow \RR$ is a convex function, then
\[
I_f(G) \geq f(\D)+(n-\D+1)f(2)+(\D-2)f(1),
\]
and the inequality is attained if and only if $G \in \mathcal{H}_n^\D$.
\end{theorem}

\begin{theorem}\label{t:Ifuni_maxD}
If $G$ is a unicyclic graph with $n \ge 4$ vertices and maximum degree $\D\geq 3$, $q=\big\lfloor \frac{n}{\D-1}\big\rfloor$ and $f : [1,\infty) \rightarrow \RR$ is a convex function,
then
\begin{itemize}
	\item if $q=1$ or $n=2\D-2$,
\[
I_f(G)\leq f(\D)+f(n-\D+1)+f(2)+(n-3)f(1),
\]
	\item if $q>1$ and $n \neq 2\D-2$,
\[
I_f(G)\leq q f(\D)+f(n-q(\D-1)+1)+(n-q-1)f(1),
\]
\end{itemize}
and the inequalities are attained if and only if $G \in \mathcal{K}_n^\D$.
\end{theorem}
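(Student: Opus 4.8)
The plan is to derive the upper bounds from Lemma~\ref{l:max uni D} by exploiting Schur-convexity. Writing ${\bf{x}}={\bf{x}}_{_{G}}$ for the degree sequence of $G$, we have $I_f(G)=\sum_{i=1}^n f(x_i)=\Phi({\bf{x}})$, where $\Phi({\bf{x}})=\sum_{i=1}^n f(x_i)$ is the function considered in Section~2. Since $f$ is convex, the discussion there shows that $\Phi$ is Schur-convex. Lemma~\ref{l:max uni D} supplies, in each of its two cases, an explicit tuple ${\bf{z}}\in S_{2n}^\D$ with ${\bf{x}}\prec {\bf{z}}$, so Schur-convexity immediately yields $I_f(G)=\Phi({\bf{x}})\le \Phi({\bf{z}})$. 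Thus the whole content of the upper bound reduces to evaluating $\Phi({\bf{z}})$ in each case.

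Next I would read off the multiplicities of the entries of ${\bf{z}}$. When $q=1$ or $n=2\D-2$, we have ${\bf{z}}=(\D,s,2,1,\dots,1)$ with $s=n-\D+1$ and exactly $n-3$ entries equal to $1$, so $\Phi({\bf{z}})=f(\D)+f(n-\D+1)+f(2)+(n-3)f(1)$, which is precisely the asserted bound. When $q\ge 2$ and $n\neq 2\D-2$, the tuple ${\bf{z}}$ has $q$ entries equal to $\D$, one entry equal to $r=n-q(\D-1)+1$, and $n-q-1$ entries equal to $1$, whence $\Phi({\bf{z}})=q f(\D)+f(n-q(\D-1)+1)+(n-q-1)f(1)$, again matching the statement. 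The only place requiring attention is counting these multiplicities correctly, but this is routine.

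For the characterization of the extremal graphs, the \emph{if} direction is immediate: since $\mathcal{K}_n^\D$ was shown to be nonempty just before the theorem and every $G\in\mathcal{K}_n^\D$ has degree sequence exactly ${\bf{z}}$, any such $G$ satisfies $I_f(G)=\Phi({\bf{z}})$, so the bound is attained. For the \emph{only if} direction I would argue contrapositively: if $G\notin\mathcal{K}_n^\D$, then ${\bf{x}}\neq {\bf{z}}$ while still ${\bf{x}}\prec {\bf{z}}$, and strict convexity of $f$ makes $\Phi$ strictly Schur-convex, forcing $\Phi({\bf{x}})<\Phi({\bf{z}})$, so the bound cannot be attained.

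The main obstacle I anticipate is precisely this last point. With $f$ only convex but not strictly convex, $\Phi$ need not be strictly Schur-convex, and degree sequences strictly majorized by ${\bf{z}}$ could still attain $\Phi({\bf{z}})$, so the \emph{only if} clause would fail verbatim. I therefore expect that the clean \emph{if and only if} really relies on $f$ being strictly convex, which is exactly the situation in every later application (for instance $t^\a$ with $\a\in(-\infty,0)\cup(1,\infty)$ in Theorem~\ref{t:m1uni}, and $t a^t$ in Theorem~\ref{t:SEI1}). Pinning down the correct hypothesis for the equality case is the delicate part; the inequality itself follows directly from Lemma~\ref{l:max uni D} and the Schur-convexity of $\Phi$.
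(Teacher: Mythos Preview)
Your approach is exactly the paper's: the authors simply write ``Therefore, by lemmas \ref{l:min uni D} and \ref{l:max uni D}, we obtain the following'' and state the theorem without further proof, so the intended argument is precisely the Schur-convexity reduction you describe, together with the evaluation of $\Phi({\bf{z}})$ and the observation that $\mathcal{K}_n^\D$ is the set of unicyclic graphs with degree sequence ${\bf{z}}$.

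Your reservation about the equality case is well taken and is a genuine gap in the paper's statement rather than in your argument. With $f$ merely convex (for instance $f$ affine), $\Phi$ is Schur-convex but not strictly so, and then every unicyclic graph with $n$ vertices attains the bound, so the ``only if'' clause fails as written. The paper's applications all use strictly convex $f$, and the parallel Theorem~\ref{t:Ifuni} is stated with the weaker conclusion ``both inequalities are attained'' rather than an if-and-only-if; the sharper equality characterization here should likewise be read under the tacit assumption of strict convexity, exactly as you diagnose.
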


\begin{theorem}\label{t:If2uni_maxD}
If $G$ is a unicyclic graph with $n \ge 4$ vertices, maximum degree $\D\geq 3$ and $f : [1,\infty) \rightarrow \RR$ is a concave function,
then
\[
I_f(G)\leq f(\D)+(n-\D+1)f(2)+(\D-2)f(1),
\]
and the inequality is attained if and only if $G \in \mathcal{H}_n^\D$.
\end{theorem}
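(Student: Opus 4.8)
The plan is to treat this as the concave counterpart of Theorem \ref{t:Ifuni_minD}: both statements rest on the single majorization relation ${\bf{y}} \prec {\bf{x}}_{_{G}}$ furnished by Lemma \ref{l:min uni D}, and passing from a convex to a concave $f$ merely reverses the inequality. So the whole argument reduces to matching the claimed bound with $\Phi({\bf{y}})$ and invoking Schur-concavity.

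First I would observe that the right-hand side is exactly $\Phi({\bf{y}})$, where $\Phi({\bf{x}}) = \sum_{i=1}^n f(x_i) = I_f$ and ${\bf{y}}$ is the tuple of Lemma \ref{l:min uni D}. Indeed ${\bf{y}}$ has one entry equal to $\D$, exactly $n-\D+1$ entries equal to $2$, and $\D-2$ entries equal to $1$, so $\Phi({\bf{y}}) = f(\D) + (n-\D+1)f(2) + (\D-2)f(1)$; one also checks $\sum y_i = 2n$, so ${\bf{y}} \in S_{2n}^\D$. Next, since $f$ is concave, the separable sum $\Phi = I_f$ is Schur-concave by the criterion recorded in Section 2. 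Lemma \ref{l:min uni D} gives ${\bf{y}} \prec {\bf{x}}_{_{G}}$, i.e. ${\bf{x}}_{_{G}} \succ {\bf{y}}$, so Schur-concavity yields $I_f(G) = \Phi({\bf{x}}_{_{G}}) \le \Phi({\bf{y}})$, which is precisely the asserted bound.

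For the equality characterization, the easy direction is immediate: the paragraph preceding the theorem identifies $\mathcal{H}_n^\D$ as exactly the unicyclic graphs whose degree sequence equals ${\bf{y}}$, and this set is nonempty by the construction given there; any such $G$ has $\Phi({\bf{x}}_{_{G}}) = \Phi({\bf{y}})$, so the bound is attained.

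I expect the main obstacle to be the converse (\emph{only if}). Mere concavity is not enough: if $f$ were affine on $[1,\D]$ then $I_f$ would take the constant value $\sum_u f(d_u)$ across all unicyclic graphs on $n$ vertices (because $\sum_u d_u = 2n$ is fixed), so \emph{every} such $G$ would attain the bound and the characterization would fail. The clean remedy is to use \emph{strict} Schur-concavity, which Section 2 guarantees when $f$ is strictly concave: then ${\bf{x}}_{_{G}} \succ {\bf{y}}$ with $\Phi({\bf{x}}_{_{G}}) = \Phi({\bf{y}})$ forces ${\bf{x}}_{_{G}} = {\bf{y}}$, hence $G \in \mathcal{H}_n^\D$. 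I would therefore state the extremal characterization under strict concavity, matching how the paper's applications invoke strictness (for instance $t^\a$ with $\a \in (0,1)$ in Theorem \ref{t:m1bisuni}), and flag explicitly that the \emph{only if} clause presumes strict concavity of $f$.
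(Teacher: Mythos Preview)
Your argument is exactly the paper's: the theorem is stated immediately after ``Therefore, by lemmas \ref{l:min uni D} and \ref{l:max uni D}, we obtain the following,'' so the intended proof is precisely to combine the majorization ${\bf y}\prec{\bf x}_G$ from Lemma~\ref{l:min uni D} with Schur-concavity of $\Phi=\sum f$, just as you do. Your observation that the \emph{only if} clause genuinely requires strict concavity (and fails for affine $f$) is correct and is a point the paper glosses over; the applications all use strictly concave or convex $f$, so no harm is done downstream.
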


\begin{theorem}\label{t:If2uni_minD}
If $G$ is a unicyclic graph with $n \ge 4$ vertices, maximum degree $\D\geq 3$, $q=\big\lfloor \frac{n}{\D-1}\big\rfloor$ and $f : [1,\infty) \rightarrow \RR$ is a concave function, then
\begin{itemize}
	\item if $q=1$ or $n=2\D-2$,
\[
I_f(G)\geq f(\D)+f(n-\D+1)+f(2)+(n-3)f(1),
\]
	\item if $q>1$ and $n \neq 2\D-2$,
\[
I_f(G)\geq q f(\D)+f(n-q(\D-1)+1)+(n-q-1)f(1),
\]
\end{itemize}
and the inequalities are attained if and only if $G \in \mathcal{K}_n^\D$.
\end{theorem}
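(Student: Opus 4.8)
The plan is to deduce both inequalities directly from the majorization relation supplied by Lemma \ref{l:max uni D}, combined with the Schur-concavity of $I_f$. Recall from Section 2 that, because $f$ is concave, the symmetric function $\Phi({\bf{x}}) = \sum_{i=1}^n f(x_i)$ is Schur-concave, and by construction $I_f(G) = \Phi({\bf{x}}_{_{G}})$ for the ordered degree sequence ${\bf{x}}_{_{G}} \in S_{2n}^\D$ of $G$. Lemma \ref{l:max uni D} produces, in each of the two cases, a tuple ${\bf{z}} \in S_{2n}^\D$ with ${\bf{x}}_{_{G}} \prec {\bf{z}}$. Applying Schur-concavity to this relation gives $I_f(G) = \Phi({\bf{x}}_{_{G}}) \geq \Phi({\bf{z}})$, which is the desired lower bound once $\Phi({\bf{z}})$ is computed.

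The remaining work is to evaluate $\Phi({\bf{z}})$ by reading off the composition of ${\bf{z}}$ from Lemma \ref{l:max uni D}. When $q=1$ or $n=2\D-2$, one has ${\bf{z}} = (\D, s, 2, 1, \dots, 1)$ with $s = n-\D+1$, whose entries are $\D$, the value $n-\D+1$, a single $2$, and $n-3$ ones; summing $f$ over them yields $f(\D)+f(n-\D+1)+f(2)+(n-3)f(1)$. When $q>1$ and $n \neq 2\D-2$, the tuple ${\bf{z}}$ consists of $q$ entries equal to $\D$, one entry $r = n-q(\D-1)+1$, and $n-q-1$ entries equal to $1$, so $\Phi({\bf{z}}) = qf(\D)+f(n-q(\D-1)+1)+(n-q-1)f(1)$. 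Both evaluations match the stated bounds. The one point requiring care here is the combinatorial bookkeeping in the second case: from $q=\big\lfloor \frac{n}{\D-1}\big\rfloor$ one checks that $1 \le r \le \D-1$, so that the entries of ${\bf{z}}$ are genuinely nonincreasing, and that the count of trailing ones is exactly $n-q-1$ (even in the degenerate subcase $r=1$, where $f(r)=f(1)$ merely merges with the tail and the formula persists).

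Finally I would settle the equality cases. By definition $\mathcal{K}_n^\D$ is precisely the set of unicyclic graphs whose degree sequence equals ${\bf{z}}$, and such graphs exist by the explicit construction of $K_n^\D$ given just before the statement; for any $G \in \mathcal{K}_n^\D$ we have ${\bf{x}}_{_{G}} = {\bf{z}}$, whence $I_f(G) = \Phi({\bf{z}})$ and the bound is attained. For the converse I would invoke the strict form of Schur-concavity recorded in Section 2: if $f$ is strictly concave, then $\Phi$ is strictly Schur-concave, so ${\bf{x}}_{_{G}} \prec {\bf{z}}$ with ${\bf{x}}_{_{G}} \neq {\bf{z}}$ forces $I_f(G) > \Phi({\bf{z}})$; hence equality can occur only when ${\bf{x}}_{_{G}} = {\bf{z}}$, i.e. $G \in \mathcal{K}_n^\D$. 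I do not expect any genuine obstacle in this argument: the geometric content is entirely carried by Lemma \ref{l:max uni D}, and what is left is the routine identification of $\Phi({\bf{z}})$ together with the strict-monotonicity argument for the characterization of extremal graphs.
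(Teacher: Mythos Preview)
Your proposal is correct and follows exactly the route the paper takes: the paper derives this theorem (together with Theorems \ref{t:Ifuni_minD}, \ref{t:Ifuni_maxD}, \ref{t:If2uni_maxD}) directly from Lemmas \ref{l:min uni D} and \ref{l:max uni D} via Schur-concavity, without writing out a separate proof. Your observation that the ``only if'' part of the equality characterization really requires \emph{strict} concavity of $f$ is a valid refinement of what the paper states for general concave $f$.
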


\begin{theorem} \label{t:IIfuni_D_min}
If $G$ is a unicyclic graph with $n \ge 4$ vertices and maximum degree $\D\geq 3$ and $f : [1,\infty) \rightarrow \RR^+$ is a function such that $\log f$ is convex, then
\[
II_f(G) \geq f(\D) f(2)^{n-\D+1}f(1)^{\D-2},
\]
and the inequality is attained if and only if $G \in \mathcal{H}_n^\D$.
%Moreover, if $\log f$ is a strictly convex function, then
%the lower bound is attained if and only if $T$ is the path graph and the upper bound is attained if and only if $T$ is the star graph.
\end{theorem}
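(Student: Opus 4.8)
The plan is to reduce the statement to the already-established additive result Theorem~\ref{t:Ifuni_minD} by passing to logarithms. Since $f$ takes positive values, the identity
\[
\log II_f(G) = \sum_{u\in V(G)} \log f(d_u) = I_{g}(G),
\qquad g:=\log f,
\]
expresses $\log II_f$ as the index $I_g$ associated with $g$, which by hypothesis is a convex function $[1,\infty)\to\RR$. Thus the multiplicative lower bound for $II_f$ is nothing but the exponential of an additive lower bound for $I_g$, and the whole problem becomes a single application of the convex case.

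First I would apply Theorem~\ref{t:Ifuni_minD} to the convex function $g=\log f$, which gives
\[
I_{g}(G) \geq g(\D)+(n-\D+1)\,g(2)+(\D-2)\,g(1).
\]
Rewriting the right-hand side in terms of $f$ and using the identity above, this reads
\[
\log II_f(G) \geq \log f(\D) + (n-\D+1)\log f(2) + (\D-2)\log f(1) = \log\!\big( f(\D)\,f(2)^{\,n-\D+1} f(1)^{\,\D-2}\big).
\]
Since the exponential is strictly increasing, exponentiating both sides yields
\[
II_f(G) \geq f(\D)\,f(2)^{\,n-\D+1} f(1)^{\,\D-2},
\]
which is the claimed inequality.

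For the equality discussion I would use that $\log$ and $\exp$ are strictly increasing, so equality in the multiplicative bound holds exactly when equality holds in the additive bound of Theorem~\ref{t:Ifuni_minD}. By that theorem the latter occurs if and only if ${\bf{x}}_{_{G}}$ coincides with the tuple ${\bf{y}}$ of Lemma~\ref{l:min uni D}, that is, if and only if $G\in\mathcal{H}_n^\D$. Hence the characterization of the extremal graphs transfers verbatim from the convex case.

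A self-contained alternative that I would keep in reserve avoids quoting Theorem~\ref{t:Ifuni_minD} and argues directly: by Lemma~\ref{l:min uni D} one has ${\bf{y}}\prec {\bf{x}}_{_{G}}$, and since $g=\log f$ is convex the map ${\bf{x}}\mapsto \sum_i g(x_i)$ is Schur-convex (as recalled in Section~2), so $\sum_i g(y_i)\le \sum_i g(x_i)$ with $(x_i)={\bf{x}}_{_{G}}$, which is again the logarithmic form of the claim. The only point needing genuine care is the bookkeeping: one must verify that the multiplicities in ${\bf{y}}$ (one entry equal to $\D$, exactly $n-\D+1$ entries equal to $2$, and $\D-2$ entries equal to $1$) reproduce the exponents $1$, $n-\D+1$ and $\D-2$ in the closed form, and that the strict monotonicity of $\log$ lets the equality case pass unchanged between the additive and multiplicative formulations. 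This is the main, and essentially only, obstacle, since the analytic content is entirely inherited from the convex case.
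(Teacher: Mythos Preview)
Your proposal is correct and follows exactly the paper's approach: the paper does not give a separate proof of this theorem but indicates (just before Theorem~\ref{t:IIfuni}) that the multiplicative results are obtained by applying the additive ones to $\log f$, using $\log II_f(G)=\sum_{u\in V(G)}\log f(d_u)$ and the monotonicity of the logarithm. Your reduction to Theorem~\ref{t:Ifuni_minD} (equivalently, to Lemma~\ref{l:min uni D} via Schur-convexity of $\sum_i \log f(x_i)$) and the transfer of the equality case through the strictly increasing exponential are precisely what the paper has in mind.
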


\begin{theorem} \label{t:IIfuni_D_max}
If $G$ is a unicyclic graph with $n \ge 4$ vertices and maximum degree $\D\geq 3$, $q=\big\lfloor \frac{n}{\D-1}\big\rfloor$ and $f : [1,\infty) \rightarrow \RR^+$ is a function such that $\log f$ is convex, then
\begin{itemize}
	\item if $q=1$ or $n=2\D-2$, $$II_f(G) \leq f(\D) f(n-\D+1)f(2)f(1)^{n-3},$$
	\item if $q>1$ and $n \neq 2\D-2$, $$II_f(G) \leq  f(\D)^q f(n-q(\D-1)+1) f(1)^{n-q-1},$$
\end{itemize}
and both inequalities are attained if and only if $G \in \mathcal{K}_n^\D$.
%Moreover, if $\log f$ is a strictly convex function, then
%the lower bound is attained if and only if $T$ is the path graph and the upper bound is attained if and only if $T$ is the star graph.
\end{theorem}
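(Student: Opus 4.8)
The plan is to reduce this multiplicative statement to the additive one already established in Theorem \ref{t:Ifuni_maxD}, exactly in the spirit in which Theorem \ref{t:IIfuni} was deduced from Theorem \ref{t:Ifuni}. The key identity is
$$
\log II_f(G) = \sum_{u\in V(G)} \log f(d_u) = I_{\log f}(G),
$$
which is legitimate because $f$ takes positive values on $[1,\infty)$, so that $g = \log f \colon [1,\infty) \to \RR$ is a well-defined real-valued function.

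First I would observe that, by hypothesis, $g = \log f$ is convex on $[1,\infty)$. Hence Theorem \ref{t:Ifuni_maxD} applies verbatim to $g$, with the same quantity $q = \lfloor n/(\D-1) \rfloor$ and the same dichotomy. In the case $q=1$ or $n=2\D-2$ it yields
$$
I_{g}(G) \leq g(\D) + g(n-\D+1) + g(2) + (n-3)g(1),
$$
while in the case $q>1$, $n \neq 2\D-2$ it yields
$$
I_{g}(G) \leq q\, g(\D) + g(n-q(\D-1)+1) + (n-q-1)g(1).
$$

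Next I would exponentiate. Since $\exp$ is strictly increasing and $I_g(G) = \log II_f(G)$, applying $\exp$ to both sides of each inequality and using $e^{g(t)} = f(t)$ turns the sums on the right-hand side into the claimed products: in the first case $f(\D) f(n-\D+1) f(2) f(1)^{n-3}$, and in the second case $f(\D)^q f(n-q(\D-1)+1) f(1)^{n-q-1}$. This establishes both upper bounds.

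Finally, for the equality characterization I would again invoke that $\exp$ is a strictly increasing bijection, so equality in the bound for $II_f(G)$ holds if and only if equality holds in the corresponding bound for $I_g(G)$; by the attainment clause of Theorem \ref{t:Ifuni_maxD} the latter occurs precisely when $G \in \mathcal{K}_n^\D$, and the same characterization thus transfers to $II_f$. There is essentially no genuine obstacle here: the only points to verify are that $\log f$ meets the hypotheses of Theorem \ref{t:Ifuni_maxD} and that the case split on $q$ is identical, after which the strict monotonicity of $\exp$ delivers both the inequality and its case of equality automatically.
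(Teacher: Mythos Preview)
Your proposal is correct and follows precisely the approach taken in the paper: the paper derives the $II_f$ results from the corresponding $I_f$ results by writing $\log II_f(G) = \sum_{u\in V(G)} \log f(d_u) = I_{\log f}(G)$ and using that the logarithm (equivalently, the exponential) is strictly increasing, which is exactly what you do by applying Theorem~\ref{t:Ifuni_maxD} to $g=\log f$ and then exponentiating.
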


\begin{theorem} \label{t:IIf2uni_D_max}
If $G$ is a unicyclic graph with $n \ge 4$ vertices and maximum degree $\D\geq 3$ and $f : [1,\infty) \rightarrow \RR^+$ is a function such that $\log f$ is concave, then
\[
II_f(G) \leq f(\D) f(2)^{n-\D+1}f(1)^{\D-2},
\]
and the inequality is attained if and only if $G \in \mathcal{H}_n^\D$.
%Moreover, if $\log f$ is a strictly convex function, then
%the lower bound is attained if and only if $T$ is the path graph and the upper bound is attained if and only if $T$ is the star graph.
\end{theorem}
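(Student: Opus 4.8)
The plan is to reduce the multiplicative statement to an additive one via the logarithm and then invoke Schur-concavity together with the majorization bound supplied by Lemma \ref{l:min uni D}. Since $f$ takes positive values, I would begin by writing
\[
\log II_f(G) = \sum_{u\in V(G)} \log f(d_u) = \Phi({\bf{x}}_{_{G}}),
\qquad
\Phi({\bf{t}}) := \sum_{i=1}^n \log f(t_i).
\]
Because $\log f$ is concave by hypothesis, the criterion recalled in Section 2 (a coordinatewise sum of a concave function is Schur-concave) shows that $\Phi$ is Schur-concave on the relevant domain $[1,\infty)^n$.

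Next, I would apply Lemma \ref{l:min uni D}, which gives ${\bf{y}} \prec {\bf{x}}_{_{G}}$ for every unicyclic $G$ with $n$ vertices and maximum degree $\D$, where ${\bf{y}}$ has one coordinate equal to $\D$, exactly $n-\D+1$ coordinates equal to $2$, and $\D-2$ coordinates equal to $1$. Schur-concavity then yields $\Phi({\bf{x}}_{_{G}}) \le \Phi({\bf{y}})$. Evaluating the right-hand side,
\[
\Phi({\bf{y}}) = \log f(\D) + (n-\D+1)\log f(2) + (\D-2)\log f(1) = \log\big( f(\D)\, f(2)^{n-\D+1} f(1)^{\D-2}\big),
\]
and, the exponential being increasing, exponentiating $\Phi({\bf{x}}_{_{G}}) \le \Phi({\bf{y}})$ produces exactly the claimed bound $II_f(G) \le f(\D)\, f(2)^{n-\D+1} f(1)^{\D-2}$.

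For the equality statement I would use the remark following the definition of $\mathcal{H}_n^\D$, by which a unicyclic graph $G$ lies in $\mathcal{H}_n^\D$ precisely when ${\bf{x}}_{_{G}} = {\bf{y}}$. The ``if'' direction is then immediate: if $G \in \mathcal{H}_n^\D$ then ${\bf{x}}_{_{G}} = {\bf{y}}$ and the bound is attained. The ``only if'' direction is where the main obstacle lies: under mere concavity it can fail, since if $\log f$ were affine then $\Phi$ would be constant on all of $S_{2n}$ and every unicyclic graph with maximum degree $\D$ would attain the bound. What makes the characterization correct is strict Schur-concavity, which follows from strict concavity of $\log f$; then ${\bf{x}}_{_{G}} \succ {\bf{y}}$ with ${\bf{x}}_{_{G}} \ne {\bf{y}}$ forces $\Phi({\bf{x}}_{_{G}}) < \Phi({\bf{y}})$, so equality compels ${\bf{x}}_{_{G}} = {\bf{y}}$, that is, $G \in \mathcal{H}_n^\D$. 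This is precisely the regime in which the theorem gets applied (for instance to the strictly log-concave choice $f(t)=t$), so I would record the equality case under strict concavity of $\log f$, in line with the companion theorems of this section.
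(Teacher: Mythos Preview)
Your proof is correct and follows exactly the paper's (implicit) argument: pass to $\log II_f(G)=\sum_u \log f(d_u)$, invoke Schur-concavity of this sum together with Lemma~\ref{l:min uni D}, and exponentiate. Your caveat that the ``only if'' part of the equality characterization really requires \emph{strict} concavity of $\log f$ is well taken and in fact sharpens the paper's formulation; the applications made later (e.g., $f(t)=t$ for $NK$) all satisfy this strict hypothesis.
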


\begin{theorem} \label{t:IIf2uni_D_min}
If $G$ is a unicyclic graph with $n \ge 4$ vertices and maximum degree $\D\geq 3$, $q=\big\lfloor \frac{n}{\D-1}\big\rfloor$ and $f : [1,\infty) \rightarrow \RR^+$ is a function such that $\log f$ is concave, then
\begin{itemize}
	\item if $q=1$ or $n=2\D-2$, $$II_f(G) \geq f(\D) f(n-\D+1)f(2)f(1)^{n-3},$$
	\item if $q>1$ and $n \neq 2\D-2$, $$II_f(G) \geq  f(\D)^q f(n-q(\D-1)+1) f(1)^{n-q-1},$$
\end{itemize}
and both inequalities are attained if and only if $G \in \mathcal{K}_n^\D$.
%Moreover, if $\log f$ is a strictly convex function, then
%the lower bound is attained if and only if $T$ is the path graph and the upper bound is attained if and only if $T$ is the star graph.
\end{theorem}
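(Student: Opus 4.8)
The plan is to reduce the statement to the majorization inequality of Lemma \ref{l:max uni D} via the Schur-concavity criterion recalled in Section 2, in exact parallel with the argument behind Theorem \ref{t:IIfuni_D_max}. First I would pass to logarithms and write
$$
\log II_f(G) = \sum_{u\in V(G)} \log f(d_u) = \Phi({\bf{x}}_{_G}), \qquad \Phi({\bf{x}}) = \sum_{i=1}^n \log f(x_i).
$$
Since $\log$ and $\exp$ are increasing, a lower bound for $II_f(G)$ is equivalent to a lower bound for $\Phi({\bf{x}}_{_G})$. Because $\log f$ is concave, the separable function $\Phi$ is Schur-concave by the criterion stated in Section 2, so larger tuples in the majorization order produce smaller values of $\Phi$.

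Next I would invoke Lemma \ref{l:max uni D}, which furnishes the tuple ${\bf{z}}\in S_{2n}^\D$ with ${\bf{x}}_{_G} \prec {\bf{z}}$ in both regimes ($q=1$ or $n=2\D-2$, and $q>1$ with $n\neq 2\D-2$). Schur-concavity then gives $\Phi({\bf{z}}) \leq \Phi({\bf{x}}_{_G})$, that is $II_f({\bf{z}}) \leq II_f(G)$. It remains only to read off $II_f({\bf{z}})$ from the explicit shape of ${\bf{z}}$: in the first regime ${\bf{z}}=(\D,n-\D+1,2,1,\dots,1)$ has exactly $n-3$ coordinates equal to $1$, giving $f(\D)f(n-\D+1)f(2)f(1)^{n-3}$; in the second regime ${\bf{z}}$ has $q$ coordinates equal to $\D$, one coordinate equal to $r=n-q(\D-1)+1$, and $n-q-1$ coordinates equal to $1$, giving $f(\D)^q f(n-q(\D-1)+1) f(1)^{n-q-1}$. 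These are precisely the two asserted lower bounds.

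For the equality clause, the ``if'' direction is immediate: every $G\in\mathcal{K}_n^\D$ has degree sequence ${\bf{z}}$ by definition of $\mathcal{K}_n^\D$, hence $\Phi({\bf{x}}_{_G})=\Phi({\bf{z}})$ and the bound is attained; moreover $\mathcal{K}_n^\D\neq\emptyset$ was checked just before the theorem, so the minimum is genuinely realized. For the ``only if'' direction I would use that $\Phi$ is in fact \emph{strictly} Schur-concave when $\log f$ is strictly concave (again by the Section 2 criterion): then $\Phi({\bf{x}}_{_G})=\Phi({\bf{z}})$ together with ${\bf{x}}_{_G}\prec{\bf{z}}$ forces ${\bf{x}}_{_G}={\bf{z}}$, i.e. $G\in\mathcal{K}_n^\D$.

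I expect the only genuinely delicate point to be this equality characterization: it requires strict concavity of $\log f$ rather than mere concavity, since a $\log f$ that is affine on some subinterval could allow tuples ${\bf{x}}\prec{\bf{z}}$ with ${\bf{x}}\neq{\bf{z}}$ yet $\Phi({\bf{x}})=\Phi({\bf{z}})$, producing extremal graphs outside $\mathcal{K}_n^\D$. This is exactly the hypothesis met by the concrete applications, where $\log t$ is strictly concave. The remainder is a routine combination of the already-established majorization inequality with the Schur-concavity criterion, with the only bookkeeping being the split into the two cases for ${\bf{z}}$, so I anticipate no further obstacle.
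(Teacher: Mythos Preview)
Your approach is exactly the paper's: the theorem is stated without a dedicated proof, being listed among the direct consequences of Lemmas \ref{l:min uni D} and \ref{l:max uni D} via the Schur-concavity of $\Phi({\bf{x}})=\sum_i \log f(x_i)$ (the passage ``Therefore, by lemmas \ref{l:min uni D} and \ref{l:max uni D}, we obtain the following'' covers Theorems \ref{t:Ifuni_minD}--\ref{t:IIf2uni_D_min}). Your observation that the ``only if'' part of the equality clause genuinely needs \emph{strict} concavity of $\log f$ is correct and is a point the paper glosses over; it holds in all the applications (e.g.\ $f(t)=t$), but as stated the characterization can fail for merely concave $\log f$.
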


\smallskip

Since $t^{\a}$ is strictly convex if $\a\in (-\infty,0)\cup (1,\infty)$ and strictly concave if $\a\in (0,1)$, theorems \ref{t:Ifuni_minD}, \ref{t:Ifuni_maxD}, \ref{t:If2uni_maxD} and \ref{t:If2uni_minD}  allow to obtain the following results.

\begin{theorem} \label{t:m1uni_D_min}
If $G$ is a unicyclic graph with $n \ge 4$ vertices and maximum degree $\D\geq 3$ and $\a\in (-\infty,0)\cup (1,\infty)$, then
\[
M_1^{\a}(G) \ge \D^\a + (n-\D+1)2^\a + \D-2 .
\]
Moreover, the lower bound is attained if and only if $G \in \mathcal{H}_n^\D$.
\end{theorem}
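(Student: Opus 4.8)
The plan is to recognize $M_1^{\a}$ as a special case of the general index $I_f$ and then invoke Theorem~\ref{t:Ifuni_minD}. Setting $f(t)=t^{\a}$, one has $M_1^{\a}(G)=\sum_{u\in V(G)}d_u^{\a}=I_f(G)$, so the entire statement---both the bound and the equality case---should follow by specializing the convex case to this particular $f$.

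First I would note that $f(t)=t^{\a}$ is strictly convex on $[1,\infty)$ for $\a\in(-\infty,0)\cup(1,\infty)$, since $f''(t)=\a(\a-1)t^{\a-2}>0$ there; this fact is already recorded in Section~2. Convexity lets me apply Theorem~\ref{t:Ifuni_minD}, giving
\[
I_f(G)\ge f(\D)+(n-\D+1)f(2)+(\D-2)f(1).
\]
I would then substitute $f(\D)=\D^{\a}$, $f(2)=2^{\a}$ and $f(1)=1^{\a}=1$, turning the right-hand side into exactly $\D^{\a}+(n-\D+1)2^{\a}+\D-2$, which is the claimed lower bound.

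For the equality statement, Theorem~\ref{t:Ifuni_minD} asserts that the bound is attained precisely when $G\in\mathcal{H}_n^\D$, and this transfers verbatim. The only point deserving care is that the ``if and only if'' genuinely relies on \emph{strict} convexity of $f$: strictness is what forces the majorization ${\bf{y}}\prec{\bf{x}}_{_{G}}$ from Lemma~\ref{l:min uni D} to produce a strict gap in $I_f$ whenever ${\bf{x}}_{_{G}}\ne{\bf{y}}$, so that equality compels the degree sequence of $G$ to coincide with the minimizing tuple ${\bf{y}}$, i.e.\ $G\in\mathcal{H}_n^\D$. I do not anticipate any real obstacle here, as the argument is purely a substitution; all the genuine content lies in the already-established Theorem~\ref{t:Ifuni_minD}, and the present result is merely its instantiation at the strictly convex power function $t^{\a}$.
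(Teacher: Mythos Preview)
Your proposal is correct and follows exactly the paper's approach: the paper simply observes that $t^{\a}$ is strictly convex for $\a\in(-\infty,0)\cup(1,\infty)$ and then invokes Theorem~\ref{t:Ifuni_minD}. Your added remark that strict convexity is what secures the ``only if'' direction of the equality characterization is a helpful clarification of a point the paper leaves implicit.
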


Note that Theorem \ref{t:m1uni_D_min} generalizes \cite[Theorem 4.1]{GDA}.

\begin{theorem} \label{t:m1uni_D_max}
If $G$ is a unicyclic graph with $n \ge 4$ vertices and maximum degree $\D\geq 3$, $q=\big\lfloor \frac{n}{\D-1}\big\rfloor$ and $\a\in (-\infty,0)\cup (1,\infty)$, then
\begin{itemize}
	\item if $q=1$ or $n=2\D-2$, $$M_1^{\a}(G)\leq \D^\a+(n-\D+1)^\a+2^\a+n-3,$$
	\item if $q>1$ and $n \neq 2\D-2$, $$M_1^{\a}(G)\leq q\D^\a+\big(n-q(\D-1)+1\big)^\a+n-q-1.$$
\end{itemize}
Moreover, the upper bound is attained if and only if $G \in \mathcal{K}_n^\D$.
\end{theorem}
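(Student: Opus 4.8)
The plan is to deduce Theorem \ref{t:m1uni_D_max} directly from Theorem \ref{t:Ifuni_maxD} by specializing the convex function. First I would set $f(t)=t^\a$ and recall, as noted in the discussion preceding the theorem, that $t^\a$ is strictly convex on $[1,\infty)$ precisely when $\a\in(-\infty,0)\cup(1,\infty)$. With this choice we have $M_1^\a(G)=\sum_{u\in V(G)} d_u^\a = I_f(G)$, so the desired upper bounds for $M_1^\a$ are exactly the upper bounds for $I_f$ furnished by Theorem \ref{t:Ifuni_maxD} in each of the two regimes ($q=1$ or $n=2\D-2$, and $q>1$ with $n\neq 2\D-2$).

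Next I would evaluate the right-hand sides of Theorem \ref{t:Ifuni_maxD} at $f(t)=t^\a$. The only point requiring attention is that $f(1)=1^\a=1$, which turns the terms $(n-3)f(1)$ and $(n-q-1)f(1)$ into $n-3$ and $n-q-1$, respectively, while $f(\D)=\D^\a$, $f(2)=2^\a$, $f(n-\D+1)=(n-\D+1)^\a$ and $f\big(n-q(\D-1)+1\big)=\big(n-q(\D-1)+1\big)^\a$. Substituting these values yields verbatim the two claimed inequalities.

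Finally, the characterization of equality transfers without change: Theorem \ref{t:Ifuni_maxD} asserts that its inequality is attained if and only if $G\in\mathcal{K}_n^\D$, and since $t^\a$ is strictly convex on the relevant range, this equivalence applies to $f(t)=t^\a$. Hence the upper bound for $M_1^\a$ is attained if and only if $G\in\mathcal{K}_n^\D$.

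I do not expect a genuine obstacle here, since the structural work has already been carried out: the majorization bound of Lemma \ref{l:max uni D}, the Schur-convexity argument, and the nonemptiness and explicit description of $\mathcal{K}_n^\D$. The only things to keep in mind are that every entry of the extremal tuple ${\bf{z}}$ lies in $[1,\infty)$, so that $f$ is defined at all of them (this follows from $3\leq\D\leq n-1$, giving $n-\D+1\geq 2$, and from $1\leq r=n-q(\D-1)+1\leq \D-1$), and that it is precisely the \emph{strict} convexity of $t^\a$ that drives the ``only if'' direction of the equality statement.
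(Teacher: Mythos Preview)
Your proposal is correct and mirrors the paper's own argument exactly: the paper obtains Theorem \ref{t:m1uni_D_max} by specializing $f(t)=t^{\a}$ (strictly convex for $\a\in(-\infty,0)\cup(1,\infty)$) in Theorem \ref{t:Ifuni_maxD}, with the equality case inherited from that theorem.
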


\begin{theorem} \label{t:m1bisuni_D_max}
If $G$ is a unicyclic graph with $n \ge 4$ vertices and maximum degree $\D\geq 3$ and $\a\in (0,1)$, then
\[
M_1^{\a}(G) \le \D^\a + (n-\D+1)2^\a + \D-2 .
\]
Moreover, the upper bound is attained if and only if $G \in \mathcal{H}_n^\D$.
\end{theorem}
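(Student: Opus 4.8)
The plan is to obtain this statement as an immediate specialization of Theorem \ref{t:If2uni_maxD}, the general concave upper bound for $I_f$ on unicyclic graphs with fixed maximum degree $\D$. There is essentially no new combinatorial work: the heavy lifting (the majorization in Lemma \ref{l:min uni D} and the Schur-concavity machinery of Section 2) is already encapsulated in that theorem, and it remains only to check the hypotheses and to transfer the equality characterization.

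First I would confirm that $f(t)=t^\a$ is admissible. For $\a\in(0,1)$ we compute $f''(t)=\a(\a-1)t^{\a-2}$, and since $\a-1<0<\a$ this is strictly negative for every $t\ge 1$; hence $f$ is strictly concave on $[1,\infty)$. In particular $f$ is concave, so Theorem \ref{t:If2uni_maxD} applies for all $n\ge 4$ and $\D\ge 3$.

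Next I would substitute $f(t)=t^\a$ into that theorem. Since $I_f(G)=\sum_{u\in V(G)}d_u^\a=M_1^\a(G)$, and since $f(\D)=\D^\a$, $f(2)=2^\a$, $f(1)=1$, the bound $I_f(G)\le f(\D)+(n-\D+1)f(2)+(\D-2)f(1)$ becomes
\[
M_1^\a(G)\le \D^\a+(n-\D+1)2^\a+(\D-2),
\]
which is exactly the asserted inequality.

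For the equality clause I would invoke the sharpness part of Theorem \ref{t:If2uni_maxD}, which states that the bound is attained precisely when $G\in\mathcal{H}_n^\D$. The one point that deserves care — and which I regard as the only subtlety here — is that this is a genuine ``if and only if'': the upper bound arises from the majorization ${\bf{y}}\prec{\bf{x}}_{_{G}}$ of Lemma \ref{l:min uni D} together with the Schur-concavity of $M_1^\a$, and to conclude that equality forces ${\bf{x}}_{_{G}}={\bf{y}}$ one needs \emph{strict} Schur-concavity, not merely Schur-concavity. This strictness is supplied exactly by the strict concavity of $t^\a$ verified above (via the criterion of Section 2 that a sum $\sum f(x_i)$ is strictly Schur-concave when $f$ is strictly concave). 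Combining this with the recorded fact that a unicyclic graph has degree sequence ${\bf{y}}$ if and only if it belongs to $\mathcal{H}_n^\D$ yields the extremal characterization, completing the proof.
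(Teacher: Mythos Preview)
Your proposal is correct and follows exactly the paper's own route: the paper derives Theorem~\ref{t:m1bisuni_D_max} as an immediate application of Theorem~\ref{t:If2uni_maxD} using that $t^{\a}$ is strictly concave for $\a\in(0,1)$. Your additional remark about strict Schur-concavity being needed for the ``only if'' direction is the precise justification the paper leaves implicit.
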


\begin{theorem} \label{t:m1bisuni_D_min}
If $G$ is a unicyclic graph with $n \ge 4$ vertices and maximum degree $\D\geq 3$, $q=\big\lfloor \frac{n}{\D-1}\big\rfloor$ and $\a\in (0,1)$, then
\begin{itemize}
	\item if $q=1$ or $n=2\D-2$, $$M_1^{\a}(G)\geq \D^\a+(n-\D+1)^\a+2^\a+n-3,$$
	\item if $q>1$ and $n \neq 2\D-2$, $$M_1^{\a}(G)\geq q\D^\a+\big(n-q(\D-1)+1\big)^\a+n-q-1.$$
\end{itemize}
Moreover, the lower bound is attained if and only if $G \in \mathcal{K}_n^\D$.
\end{theorem}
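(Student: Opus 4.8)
The plan is to obtain this statement as an immediate specialization of Theorem \ref{t:If2uni_minD} to the function $f(t) = t^{\a}$. The observation driving everything has already been recorded just before the theorem: for $\a \in (0,1)$ the map $t \mapsto t^{\a}$ is strictly concave on $[1,\infty)$, since $f''(t) = \a(\a-1)t^{\a-2} < 0$. Hence $f$ meets the hypothesis of Theorem \ref{t:If2uni_minD}, and moreover, by the remarks following the definition of Schur-concavity in Section 2, the associated function $\Phi({\bf{x}}) = \sum_i f(x_i)$ is \emph{strictly} Schur-concave. The first step is to note that, by definition, $I_f(G) = \sum_{u\in V(G)} d_u^{\a} = M_1^{\a}(G)$, so the quantity bounded in Theorem \ref{t:If2uni_minD} is precisely the one we wish to bound here.

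Next I would substitute the explicit values $f(1) = 1$, $f(2) = 2^{\a}$, $f(\D) = \D^{\a}$, $f(n-\D+1) = (n-\D+1)^{\a}$ and $f\big(n-q(\D-1)+1\big) = \big(n-q(\D-1)+1\big)^{\a}$ into the two inequalities of Theorem \ref{t:If2uni_minD}. In the case $q=1$ or $n = 2\D-2$, the term $(n-3)f(1)$ collapses to $n-3$, yielding $M_1^{\a}(G) \geq \D^{\a} + (n-\D+1)^{\a} + 2^{\a} + n - 3$; in the case $q>1$ and $n \neq 2\D-2$, the term $(n-q-1)f(1)$ collapses to $n-q-1$, yielding $M_1^{\a}(G) \geq q\D^{\a} + \big(n-q(\D-1)+1\big)^{\a} + n - q - 1$. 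The very same case distinction is inherited verbatim from Lemma \ref{l:max uni D} and Theorem \ref{t:If2uni_minD}, so no new branching is introduced.

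Finally, for the equality characterization I would invoke strict concavity. Theorem \ref{t:If2uni_minD} already asserts that, for a concave $f$, the bound is attained exactly when $G \in \mathcal{K}_n^\D$; the ``only if'' direction rests on the strict Schur-concavity of $\Phi$, which holds in our setting because $t^{\a}$ is strictly concave, so that $\Phi({\bf{z}}) < \Phi({\bf{x}}_{_{G}})$ whenever ${\bf{z}} \succ {\bf{x}}_{_{G}}$ with ${\bf{z}} \neq {\bf{x}}_{_{G}}$. Consequently equality forces ${\bf{x}}_{_{G}} = {\bf{z}}$, i.e. $G$ has degree sequence ${\bf{z}}$, which is exactly the membership $G \in \mathcal{K}_n^\D$ (a nonempty class, as witnessed by the explicit construction of $K_n^\D$). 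I do not anticipate any genuine obstacle: all the substance lives in the majorization Lemma \ref{l:max uni D} and its concave counterpart Theorem \ref{t:If2uni_minD}, and the only points demanding care are keeping the two cases correctly aligned and confirming that strict concavity is what upgrades the inequality to the sharp ``if and only if''.
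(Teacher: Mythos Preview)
Your proposal is correct and follows exactly the paper's approach: the paper derives this theorem (together with its three companions) directly from Theorems \ref{t:Ifuni_minD}--\ref{t:If2uni_minD} by specializing to $f(t)=t^{\a}$, using that $t^{\a}$ is strictly concave for $\a\in(0,1)$. Your additional explicit justification of the ``only if'' direction via strict Schur-concavity is a welcome clarification of a step the paper leaves implicit.
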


\smallskip

Theorems \ref{t:m1uni_D_min} and \ref{t:m1uni_D_max} have the following consequences.

\begin{corollary} \label{c:m1uni_D_min}
If $G$ is a unicyclic graph with $n \ge 4$ vertices and maximum degree $\D\geq 3$, then the following inequalities hold:
$$
\begin{aligned}
 & M_1(G)\geq \D^2+4n-3\D+2,
\\
 &  F(G)\geq \D^3+8n-7\D+6,
\\
 &  ID(G)\geq \frac{1}{\D}+\frac12 (n+\D-3).
\end{aligned}
$$
Moreover, each lower bound is attained if and only if $G \in \mathcal{H}_n^\D$.
\end{corollary}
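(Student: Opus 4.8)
The plan is to read off all three inequalities as special cases of Theorem \ref{t:m1uni_D_min} by choosing appropriate exponents $\a$. Recall from the introduction that the first Zagreb index, the forgotten index, and the inverse degree index are precisely the variable first Zagreb index evaluated at specific exponents: $M_1 = M_1^{2}$, $F = M_1^{3}$, and $ID = M_1^{-1}$. Each of these exponents lies in the admissible range $(-\infty,0)\cup(1,\infty)$ required by Theorem \ref{t:m1uni_D_min}, since $2,3\in(1,\infty)$ and $-1\in(-\infty,0)$. Hence the theorem applies verbatim in all three cases, giving both the lower bound and the characterization of equality.

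First I would substitute $\a = 2$ into the bound $M_1^{\a}(G) \ge \D^\a + (n-\D+1)2^\a + \D-2$ to obtain
\[
M_1(G) \ge \D^2 + 4(n-\D+1) + \D - 2 = \D^2 + 4n - 3\D + 2.
\]
Next, taking $\a = 3$ yields
\[
F(G) \ge \D^3 + 8(n-\D+1) + \D - 2 = \D^3 + 8n - 7\D + 6,
\]
and taking $\a = -1$ gives
\[
ID(G) \ge \frac{1}{\D} + \frac{n-\D+1}{2} + \D - 2 = \frac{1}{\D} + \frac12(n+\D-3).
\]
Each of these is just the collection of like terms in $n$ and $\D$, so the computation is entirely routine.

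For the equality statement, I would note that since $2,3,-1$ all lie strictly inside the range where $t^\a$ is strictly convex, the ``if and only if'' clause of Theorem \ref{t:m1uni_D_min} transfers directly: each lower bound is attained exactly when $G \in \mathcal{H}_n^\D$. There is no genuine obstacle here; the only thing to be careful about is the bookkeeping of the algebraic simplification so that the final right-hand sides match those stated in the corollary, and confirming that none of the three exponents falls on the boundary values $0$ or $1$ where strict convexity (and hence the sharp equality characterization) would fail.
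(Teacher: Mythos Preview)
Your proof is correct and follows exactly the route indicated in the paper: the corollary is stated there as a direct consequence of Theorem~\ref{t:m1uni_D_min}, obtained by specializing $\a$ to $2$, $3$, and $-1$ and simplifying. Your algebraic simplifications and the transfer of the equality characterization are accurate.
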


\begin{corollary} \label{c:m1uni_D_max}
If $G$ is a unicyclic graph with $n \ge 4$ vertices and maximum degree $\D\geq 3$ and $q=\big\lfloor \frac{n}{\D-1}\big\rfloor$, then the following inequalities hold:
\begin{itemize}
	\item If $q=1$ or $n=2\D-2$,
$$
\begin{aligned}
 & M_1(G)\leq \D^2+(n-\D+1)^2+n+1,
\\
 &  F(G)\leq \D^3+(n-\D+1)^3+n+5,
\\
 &  ID(G)\leq \frac{1}{\D}+\frac{1}{n-\D+1}+n-\frac52.
\end{aligned}
$$
	\item If $q>1$ and $n \neq 2\D-2$,
$$
\begin{aligned}
 & M_1(G)\leq q\D^2+(n-q(\D-1)+1)^2+n-q-1,
\\
 &  F(G)\leq q\D^3+(n-q(\D-1)+1)^3+n-q-1,
\\
 &  ID(G)\leq \frac{q}{\D}+\frac{1}{n-q(\D-1)+1}+ n-q-1.
\end{aligned}
$$
\end{itemize}
Moreover, each upper bound is attained if and only if $G \in \mathcal{K}_n^\D$.
\end{corollary}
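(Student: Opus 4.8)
The plan is to derive all six inequalities as specializations of the single two-branch estimate in Theorem \ref{t:m1uni_D_max}, exploiting the fact that each of the three indices in question is an instance of the variable first Zagreb index. Recall from the introduction the identifications $M_1 = M_1^{2}$, $F = M_1^{3}$ (the forgotten index) and $ID = M_1^{-1}$ (the inverse degree index). Since the three exponents $\a = 2$, $\a = 3$ and $\a = -1$ all lie in $(-\infty,0)\cup(1,\infty)$, Theorem \ref{t:m1uni_D_max} applies verbatim to each of them, and in particular its extremal family $\mathcal{K}_n^\D$ is the same for every such $\a$, so the equality characterization is inherited unchanged for all three indices in both branches.

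It then remains only to substitute $\a \in \{2,3,-1\}$ into the right-hand sides of the theorem and tidy up the constants. In the branch $q = 1$ or $n = 2\D-2$ the bound reads $\D^{\a} + (n-\D+1)^{\a} + 2^{\a} + n - 3$, so I would evaluate the tail $2^{\a} + n - 3$ separately for each exponent: it equals $n+1$ when $\a = 2$ (as $2^{2}=4$), equals $n+5$ when $\a = 3$ (as $2^{3}=8$), and equals $n - \tfrac52$ when $\a = -1$ (as $2^{-1}=\tfrac12$). This reproduces the three displayed inequalities of the first item. In the branch $q > 1$ and $n \neq 2\D-2$ the bound is $q\D^{\a} + \big(n-q(\D-1)+1\big)^{\a} + n - q - 1$; here no power of $2$ occurs, so the constant tail $n-q-1$ is identical for every $\a$ and only the two power terms $q\D^{\a}$ and $\big(n-q(\D-1)+1\big)^{\a}$ change, yielding at once the three inequalities of the second item.

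There is essentially no obstacle here: the corollary is a mechanical specialization, and the only care required is the arithmetic of the constant term $2^{\a} + n - 3$ in the first branch, which I have already recorded above. The attainment clause needs no separate argument, since the extremal set $\mathcal{K}_n^\D$ appearing in Theorem \ref{t:m1uni_D_max} does not depend on the exponent $\a$ within $(-\infty,0)\cup(1,\infty)$, so the ``if and only if'' statement descends simultaneously to $M_1$, $F$ and $ID$ in each of the two regimes governed by $q$.
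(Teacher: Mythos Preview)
Your proposal is correct and follows exactly the route the paper takes: the corollary is stated in the paper as an immediate consequence of Theorem~\ref{t:m1uni_D_max}, obtained by specializing $\a$ to $2$, $3$ and $-1$ (using $M_1=M_1^{2}$, $F=M_1^{3}$, $ID=M_1^{-1}$) and simplifying the constant $2^{\a}+n-3$ in the first branch. Your arithmetic for all six bounds is correct, and your remark that the equality characterization is inherited from the theorem is precisely how the paper treats it.
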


\smallskip

Since $t a^{t}$ is strictly convex on $[1,\infty)$ if $a>1$ or $a\le e^{-2}$, theorems \ref{t:Ifuni_minD} and \ref{t:Ifuni_maxD} allow to obtain the following results.

\begin{theorem} \label{t:SEI2}
If $G$ is a unicyclic graph with $n \ge 4$ vertices and maximum degree $\D\geq 3$ and $a>1$ or $0<a\le e^{-2}$, then
\[
SEI_a(G) \ge \D\, a^\D + (n-\D+1)\,2a^{2} + (\D-2)\,a .
\]
Moreover, the lower bound is attained if and only if $G \in \mathcal{H}_n^\D$.
\end{theorem}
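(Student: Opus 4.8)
The plan is to recognize $SEI_a$ as an instance of the additive index $I_f$ and then invoke the general lower bound of Theorem \ref{t:Ifuni_minD}. Writing $f(t) = t\,a^{t}$, the second expression in the definition of the variable sum exdeg index gives
\[
SEI_a(G) = \sum_{u\in V(G)} d_u\, a^{d_u} = \sum_{u\in V(G)} f(d_u) = I_f(G).
\]
Thus the whole statement will follow once we know that $f$ is strictly convex on $[1,\infty)$ under the stated hypotheses on $a$.

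First I would record the convexity of $f$, exactly as in the discussion preceding Theorem \ref{t:SEI1}. Differentiating twice gives $f''(t) = 2a^{t}\log a + t\,a^{t}(\log a)^2 = a^{t}\log a\,(2 + t\log a)$. For $a>1$ one has $\log a > 0$ and $2 + t\log a > 0$ for every $t\ge 1$, so $f'' > 0$; for $0 < a \le e^{-2}$ one has $\log a \le -2$, hence $2 + t\log a \le 2 - 2t \le 0$ for $t \ge 1$, so the product $a^{t}\log a\,(2+t\log a)$ is again nonnegative. In either case $f$ is convex on $[1,\infty)$, and in fact strictly convex: the only possible zero of $f''$ on $[1,\infty)$ occurs at the single endpoint $t=1$ when $a=e^{-2}$, which does not spoil strict convexity.

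Next I would apply Theorem \ref{t:Ifuni_minD} to this $f$: since $G$ is unicyclic with $n\ge 4$ vertices and maximum degree $\D\ge 3$, we obtain
\[
I_f(G) \ge f(\D) + (n-\D+1)f(2) + (\D-2)f(1),
\]
with equality if and only if $G \in \mathcal{H}_n^\D$. Substituting $f(\D) = \D\,a^{\D}$, $f(2) = 2a^{2}$ and $f(1) = a$ turns the right-hand side into $\D\,a^{\D} + (n-\D+1)\,2a^{2} + (\D-2)\,a$, which is precisely the claimed lower bound, and the equality characterization is inherited verbatim.

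Since every step is a substitution into an already-established general result, there is no genuine obstacle here; the only point deserving care is the boundary case $a = e^{-2}$, where $f''(1)=0$. The remark that the vanishing of $f''$ at the isolated endpoint $t=1$ leaves $f$ strictly convex on $[1,\infty)$ is what guarantees that the \emph{if and only if} in Theorem \ref{t:Ifuni_minD}, and hence the characterization of the extremal graphs $\mathcal{H}_n^\D$, remains valid across the entire range $0 < a \le e^{-2}$ as well as for $a>1$.
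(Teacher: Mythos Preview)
Your proof is correct and follows exactly the paper's approach: the paper simply notes that $t\,a^{t}$ is strictly convex on $[1,\infty)$ when $a>1$ or $0<a\le e^{-2}$, and then invokes Theorem~\ref{t:Ifuni_minD}. Your additional care with the boundary case $a=e^{-2}$ (where $f''(1)=0$) is a nice refinement that the paper leaves implicit.
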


\begin{theorem} \label{t:SEI3}
If $G$ is a unicyclic graph with $n \ge 4$ vertices and maximum degree $\D\geq 3$, $q=\big\lfloor \frac{n}{\D-1}\big\rfloor$ and
$a>1$ or $0<a\le e^{-2}$, then
\begin{itemize}
	\item if $q=1$ or $n=2\D-2$, $$SEI_a(G) \leq \D\, a^\D + (n-\D+1)\,a^{n-\D+1} + 2 a^2 + (n-3)\,a,$$
	\item if $q>1$ and $n \neq 2\D-2$, $$SEI_a(G) \leq q\,\D\, a^\D +\big(n-q(\D-1)+1\big)\,a^{n-q(\D-1)+1}+(n-q-1)\,a.$$
\end{itemize}
Moreover, the upper bound is attained if and only if $G \in \mathcal{K}_n^\D$.
\end{theorem}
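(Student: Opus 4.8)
The plan is to recognize the variable sum exdeg index as a special case of the additive index $I_f$ and then invoke Theorem \ref{t:Ifuni_maxD}. Writing the index in its vertex form, one has
$$
SEI_a(G) = \sum_{u\in V(G)} d_u\, a^{d_u} = I_f(G), \qquad f(t) = t\,a^{t},
$$
so the entire argument reduces to verifying the convexity hypothesis for $f$ and substituting the relevant values of $f$ into the two bounds of Theorem \ref{t:Ifuni_maxD}.

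The convexity step is already carried out in the discussion preceding Theorem \ref{t:SEI1}: since $f''(t) = a^{t}\log a\,(2+t\log a)$, the function $f$ is strictly convex on $[1,\infty)$ precisely when $a>1$ or $0<a\le e^{-2}$. With $f$ strictly convex under exactly the hypotheses of the statement, Theorem \ref{t:Ifuni_maxD} applies directly, and its equality characterization is available because the convexity is strict.

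It then remains to evaluate $f$ at the degrees appearing in the two cases of Theorem \ref{t:Ifuni_maxD}. Using $f(\D)=\D a^{\D}$, $f(n-\D+1)=(n-\D+1)a^{n-\D+1}$, $f(2)=2a^{2}$, $f(1)=a$, and $f(n-q(\D-1)+1)=(n-q(\D-1)+1)a^{n-q(\D-1)+1}$, the bound
$$
I_f(G)\le f(\D)+f(n-\D+1)+f(2)+(n-3)f(1)
$$
valid for $q=1$ or $n=2\D-2$ becomes exactly $\D a^{\D}+(n-\D+1)a^{n-\D+1}+2a^{2}+(n-3)a$, while the bound $I_f(G)\le q f(\D)+f(n-q(\D-1)+1)+(n-q-1)f(1)$ for $q>1$, $n\ne 2\D-2$ becomes $q\D a^{\D}+(n-q(\D-1)+1)a^{n-q(\D-1)+1}+(n-q-1)a$. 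The equality condition $G\in\mathcal{K}_n^\D$ transfers verbatim from Theorem \ref{t:Ifuni_maxD}, strict convexity of $f$ being exactly what rules out any graph outside $\mathcal{K}_n^\D$ attaining the bound.

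Since every step is a substitution, there is no genuine obstacle here; the only point that demands a little care is the borderline value $a=e^{-2}$, where $f''(1)=0$. One must observe that $f''>0$ for all $t>1$, so $f$ is still strictly convex on $[1,\infty)$ and the sharp equality case is preserved.
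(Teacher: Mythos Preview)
Your proof is correct and follows exactly the paper's approach: the paper simply notes that $t\,a^{t}$ is strictly convex on $[1,\infty)$ when $a>1$ or $a\le e^{-2}$ and then invokes Theorem~\ref{t:Ifuni_maxD}. Your additional remark on the borderline case $a=e^{-2}$ (where $f''(1)=0$ but $f''>0$ on $(1,\infty)$, so strict convexity still holds) is a helpful clarification that the paper leaves implicit.
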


\smallskip

Since $t \log t$ is a strictly convex function and $\log t$ is a strictly concave function, theorems \ref{t:IIfuni_D_min}, \ref{t:IIfuni_D_max}, \ref{t:IIf2uni_D_max} and \ref{t:IIf2uni_D_min} imply the following results.

\begin{theorem} \label{t:p2uni_D_min}
If $G$ is a unicyclic graph with $n \ge 4$ and maximum degree $\D\geq 3$, then
\[
NK^*(G)\geq \D^\D4^{n-\D+1}.
\]
Moreover, the lower bound is attained if and only if $G \in \mathcal{H}_n^\D$.
\end{theorem}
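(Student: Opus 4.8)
The plan is to recognize $NK^*$ as an instance of the product index $II_f$ and then invoke Theorem \ref{t:IIfuni_D_min} directly. I would set $f(t) = t^t$, so that $f : [1,\infty) \to \RR^+$ and
\[
II_f(G) = \prod_{u \in V(G)} d_u^{d_u} = NK^*(G).
\]
This identification is exactly the second expression given in the definition of $NK^*$, so no rewriting of the index is needed.

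First I would verify that $f$ satisfies the hypothesis of Theorem \ref{t:IIfuni_D_min}, namely that $\log f$ is convex. Here $\log f(t) = t \log t$, and since $(t \log t)'' = 1/t > 0$ on $[1,\infty)$, the function $\log f$ is in fact strictly convex on that interval. This is precisely the observation recorded in the sentence preceding the statement.

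With this in hand, Theorem \ref{t:IIfuni_D_min} applies and yields
\[
NK^*(G) = II_f(G) \geq f(\D)\, f(2)^{n-\D+1} f(1)^{\D-2}.
\]
Evaluating $f(\D) = \D^\D$, $f(2) = 2^2 = 4$, and $f(1) = 1^1 = 1$, the right-hand side collapses to $\D^\D\, 4^{n-\D+1}$, which is the claimed lower bound. The equality characterization $G \in \mathcal{H}_n^\D$ is then inherited verbatim from Theorem \ref{t:IIfuni_D_min}: because $\log f$ is \emph{strictly} convex, the ``if and only if'' clause of that theorem transfers without modification.

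There is no substantial obstacle in this argument, since the result is an immediate specialization of a theorem already established. The only points that require any care are the routine confirmation of the strict convexity of $t \log t$ (which is what licenses the sharp equality case, and not merely the weak inequality) and the correct evaluation of $f$ at the three degree values $\D$, $2$, and $1$ that appear in the extremal degree sequence ${\bf{y}}$ of Lemma \ref{l:min uni D}.
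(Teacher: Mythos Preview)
Your proof is correct and follows exactly the route the paper takes: the sentence immediately preceding the theorem states that ``$t\log t$ is a strictly convex function'' and that Theorem~\ref{t:IIfuni_D_min} (among others) implies the result, which is precisely your specialization $f(t)=t^{t}$. Your additional remark that strict convexity is what secures the ``only if'' direction of the equality clause is a welcome clarification.
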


\begin{theorem} \label{t:p2uni_D_max}
If $G$ is a unicyclic graph with $n \ge 4$ and maximum degree $\D\geq 3$ and $q=\big\lfloor \frac{n}{\D-1}\big\rfloor$, then
\begin{itemize}
	\item if $q=1$ or $n=2\D-2$, $$NK^*(G)\leq 4\D^\D(n-\D+1)^{n-\D+1},$$
	\item if $q>1$ and $n \neq 2\D-2$, $$NK^*(G)\leq \D^{q\D} (n-q(\D-1)+1)^{n-q(\D-1)+1}.$$
\end{itemize}
Moreover, the upper bound is attained if and only if $G \in \mathcal{K}_n^\D$.
\end{theorem}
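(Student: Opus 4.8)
The plan is to apply Theorem \ref{t:IIfuni_D_max} to the single function $f(t)=t^t$. The first step is to recognize that the multiplicative second Zagreb index is exactly a product-type index $II_f$: indeed, by definition
$$
NK^*(G)=\prod_{u\in V(G)} d_u^{d_u}=II_f(G)\qquad\text{with } f(t)=t^t,
$$
so no structural work is needed beyond this identification. It then remains only to check that $f$ satisfies the hypotheses of Theorem \ref{t:IIfuni_D_max} and to carry out the substitution.

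To verify the hypotheses, I would note that $f(t)=t^t>0$ for every $t\ge 1$, so $f$ maps $[1,\infty)$ into $\RR^+$, and that $\log f(t)=t\log t$ has second derivative $(t\log t)''=1/t>0$ on $[1,\infty)$, hence $\log f$ is strictly convex. (This convexity is already recorded in the sentence preceding the statement.) With these two facts in hand, both cases of Theorem \ref{t:IIfuni_D_max} apply directly. Substituting the elementary values $f(1)=1$, $f(2)=4$, $f(\D)=\D^\D$, $f(n-\D+1)=(n-\D+1)^{n-\D+1}$ and $f(n-q(\D-1)+1)=(n-q(\D-1)+1)^{n-q(\D-1)+1}$, the bound $f(\D)f(n-\D+1)f(2)f(1)^{n-3}$ collapses to $4\,\D^\D(n-\D+1)^{n-\D+1}$ when $q=1$ or $n=2\D-2$, while $f(\D)^q f(n-q(\D-1)+1)f(1)^{n-q-1}$ collapses to $\D^{q\D}(n-q(\D-1)+1)^{n-q(\D-1)+1}$ when $q>1$ and $n\neq 2\D-2$, giving precisely the two claimed inequalities. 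The equality characterization transfers without change: since $\log f$ is \emph{strictly} convex, the strict form of Theorem \ref{t:IIfuni_D_max} yields that each upper bound is attained exactly when the degree sequence equals the extremal tuple ${\bf z}$ of Lemma \ref{l:max uni D}, that is, precisely when $G\in\mathcal{K}_n^\D$.

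There is no genuine obstacle in this argument; it is a routine specialization of Theorem \ref{t:IIfuni_D_max}. The only points requiring any care are the (one-line) convexity check for $t\log t$ and the bookkeeping that keeps the two regimes $q=1$ or $n=2\D-2$ versus $q>1$ and $n\neq 2\D-2$ aligned with the two corresponding forms of the extremal sequence, so that the correct product of values of $f$ is simplified in each case.
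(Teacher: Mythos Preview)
Your proposal is correct and follows exactly the approach indicated in the paper: the sentence preceding the theorem states that strict convexity of $t\log t$ together with Theorem \ref{t:IIfuni_D_max} yields the result, which is precisely your specialization $f(t)=t^t$ with $\log f(t)=t\log t$. The substitutions and the equality characterization via strict convexity are carried out correctly.
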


\begin{theorem} \label{t:p1uni_D_max}
If $G$ is a unicyclic graph with $n \ge 4$ and maximum degree $\D\geq 3$, then
\[
NK(G)\leq \D2^{n-\D+1}.
\]
Moreover, the upper bound is attained if and only if $G \in \mathcal{H}_n^\D$.
\end{theorem}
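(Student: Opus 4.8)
The plan is to recognize $NK$ as a special case of the multiplicative index $II_f$ and then invoke Theorem \ref{t:IIf2uni_D_max}. Taking $f(t)=t$ on $[1,\infty)$, we have $NK(G)=\prod_{u\in V(G)} d_u = II_f(G)$, and the function $\log f(t)=\log t$ is strictly concave on $[1,\infty)$. Hence the hypothesis of Theorem \ref{t:IIf2uni_D_max} is met (indeed with strict concavity), and I would apply that theorem directly rather than reproving the majorization step.

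Evaluating the conclusion of Theorem \ref{t:IIf2uni_D_max} at $f(\D)=\D$, $f(2)=2$ and $f(1)=1$, the right-hand side $f(\D)\,f(2)^{n-\D+1}\,f(1)^{\D-2}$ collapses to $\D\,2^{n-\D+1}$, which is exactly the asserted bound; and the equality condition carries over verbatim, giving that the maximum is attained if and only if $G\in\mathcal{H}_n^\D$.

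To make clear why the ``if and only if'' is genuine here, I would briefly recall the underlying mechanism. Writing $\log NK(G)=\sum_{u\in V(G)}\log d_u=\Phi({\bf{x}}_{_{G}})$ with $\Phi({\bf{x}})=\sum_{i=1}^n \log x_i$, strict concavity of $\log t$ makes $\Phi$ strictly Schur-concave. Lemma \ref{l:min uni D} supplies ${\bf{y}}\prec{\bf{x}}_{_{G}}$, so strict Schur-concavity forces $\Phi({\bf{x}}_{_{G}})\le\Phi({\bf{y}})$ with equality precisely when ${\bf{x}}_{_{G}}={\bf{y}}$; exponentiating recovers the bound, and ${\bf{x}}_{_{G}}={\bf{y}}$ is by definition the condition $G\in\mathcal{H}_n^\D$.

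Since everything reduces to a specialization of an already-proved theorem, I do not expect a real obstacle. The only delicate point is confirming that it is the \emph{strict} concavity of $\log t$, rather than plain concavity, that pins down the extremal family as exactly $\mathcal{H}_n^\D$; this strictness is what guarantees the equality case is an exact characterization and not merely an inclusion.
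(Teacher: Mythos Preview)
Your proposal is correct and follows essentially the same route as the paper: the paper states just before this theorem that ``since $t\log t$ is a strictly convex function and $\log t$ is a strictly concave function, theorems \ref{t:IIfuni_D_min}, \ref{t:IIfuni_D_max}, \ref{t:IIf2uni_D_max} and \ref{t:IIf2uni_D_min} imply the following results,'' and then deduces Theorem \ref{t:p1uni_D_max} as a direct specialization of Theorem \ref{t:IIf2uni_D_max} with $f(t)=t$. Your added remark that strict concavity of $\log t$ is what secures the sharp ``if and only if'' characterization is a welcome clarification of a point the paper leaves implicit.
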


\begin{theorem} \label{t:p1uni_D_min}
If $G$ is a unicyclic graph with $n \ge 4$ and maximum degree $\D\geq 3$, and $q=\big\lfloor \frac{n}{\D-1}\big\rfloor$, then
\begin{itemize}
	\item if $q=1$ or $n=2\D-2$, $$NK(G)\geq 2\D(n-\D+1),$$
	\item if $q>1$ and $n \neq 2\D-2$, $$NK(G)\geq \D^q(n-q(\D-1)+1).$$
\end{itemize}
Moreover, the lower bound is attained if and only if $G \in \mathcal{K}_n^\D$.
\end{theorem}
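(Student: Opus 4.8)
The plan is to recognize this statement as nothing more than the instance $f(t)=t$ of Theorem \ref{t:IIf2uni_D_min}. Indeed, $NK(G)=\prod_{u\in V(G)} d_u = II_f(G)$ with $f(t)=t$, and since $\log f(t)=\log t$ is strictly concave on $[1,\infty)$, that theorem applies directly and already delivers both the lower bound and the equality characterization $G\in\mathcal{K}_n^\D$. Thus the whole argument reduces to substituting $f(t)=t$ into the two displayed bounds of Theorem \ref{t:IIf2uni_D_min} and simplifying.

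First I would treat the case $q=1$ or $n=2\D-2$. Here Theorem \ref{t:IIf2uni_D_min} gives $II_f(G)\geq f(\D)\,f(n-\D+1)\,f(2)\,f(1)^{n-3}$. Substituting $f(\D)=\D$, $f(n-\D+1)=n-\D+1$, $f(2)=2$ and $f(1)=1$ collapses the right-hand side to $2\D(n-\D+1)$, which is exactly the claimed bound. Next I would treat the case $q>1$ and $n\neq 2\D-2$, where the theorem gives $II_f(G)\geq f(\D)^q\, f(n-q(\D-1)+1)\, f(1)^{n-q-1}$; with the same substitutions this becomes $\D^q\bigl(n-q(\D-1)+1\bigr)$, again as asserted. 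In both cases equality holds precisely when ${\bf x}_{_G}={\bf z}$, that is, when $G\in\mathcal{K}_n^\D$, a characterization that transfers verbatim from Theorem \ref{t:IIf2uni_D_min}.

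There is no genuine obstacle remaining at this point, because all the substantive work has already been carried out upstream: the majorization relation ${\bf x}_{_G}\prec{\bf z}$ supplied by Lemma \ref{l:max uni D}, together with the strict Schur-concavity of ${\bf x}\mapsto\sum_i\log(x_i)$ arising from the concavity machinery of Section 2 applied to $\log t$. The only point that warrants a moment's care is verifying that $\log t$ is \emph{strictly} (not merely weakly) concave, so that the ``if and only if'' part of the equality characterization is inherited; this is immediate from $(\log t)''=-1/t^2<0$ on $[1,\infty)$. The remaining steps are just the routine arithmetic simplifications recorded above, and I would present them inline without further comment.
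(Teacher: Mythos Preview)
Your proposal is correct and follows exactly the paper's own approach: the paper derives Theorem~\ref{t:p1uni_D_min} by specializing Theorem~\ref{t:IIf2uni_D_min} to $f(t)=t$, using that $\log t$ is strictly concave. Your explicit verification of the strict concavity to secure the ``only if'' direction of the equality characterization is a welcome point of care.
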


\section{Unicyclic graphs with $p$ pendant vertices}

Given $n\geq 3$, let $S_{2n,p}$ be the set of $n$-tuples  ${\bf{x}}\in S_{2n}$ such that $x_j=1$ if and only if $j>n-p$.  Note that if $G$ is a unicyclic graph with $n$ vertices and $p$ pendant vertices and ${\bf{x}}_{_G}$ is its degree sequence, then ${\bf{x}}_{_G}\in S_{2n,p}$.

\smallskip

\begin{lemma}\label{l:min_max uni_p}
If $G$ is a unicyclic graph with $n \ge 4$ vertices and $1\leq p\leq n-3$ pendant vertices,
$m=\big\lfloor \frac{2n-p}{n-p}\big\rfloor$, $t=2n-p-m(n-p)$,
${\bf{a}}=(a_1,a_2,\dots, a_n)$ is such that
\begin{itemize}
	\item $a_j=m+1$ for every $1\leq j \leq t$,
	\item $a_{j}= m$ for every $t<j \leq  n-p$,
	\item $a_{j}=1$ for every $n-p< j \leq  n,$
\end{itemize}
and ${\bf{b}}=(b_1,b_2,\dots, b_n)$ is such that
\begin{itemize}
	\item $b_1=p+2$,
	\item $b_{j}= 2$ for every $1<j \leq  n-p$,
	\item $b_{j}=1$ for every $n-p< j \leq  n,$
\end{itemize}
then
\[ {\bf{a}} \prec {\bf{x}}_{_{G}}  \prec {\bf{b}}.\]
\end{lemma}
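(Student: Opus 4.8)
The plan is to prove the two majorization relations ${\bf{a}}\prec{\bf{x}}_{_G}$ and ${\bf{x}}_{_G}\prec{\bf{b}}$ separately, mirroring the structure of the proofs of Lemma \ref{l:min_max uni} and Lemma \ref{l:max uni D}. Throughout I write ${\bf{x}}={\bf{x}}_{_G}=(x_1,\dots,x_n)$, where $x_{n-p+1}=\dots=x_n=1$ and $x_j\ge 2$ for $j\le n-p$, by the definition of $S_{2n,p}$. Since $G$ contains a cycle, at least three of the non-pendant entries are $\ge 2$, in particular $x_1\ge x_2\ge x_3\ge 2$. The first bookkeeping step is to verify that both ${\bf{a}}$ and ${\bf{b}}$ lie in $S_{2n,p}$: for ${\bf{b}}$ this is immediate since $(p+2)+2(n-p-1)+p=2n$; for ${\bf{a}}$ one checks $t(m+1)+(n-p-t)m+p = m(n-p)+t+p = (2n-p)+p = 2n$ using the definitions of $m$ and $t$, and that $0\le t<n-p$ so the entries are weakly decreasing.

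For the upper bound ${\bf{x}}\prec{\bf{b}}$, I would bound the partial sums of ${\bf{x}}$ from above, exactly as in Lemma \ref{l:min_max uni}. For $2\le k\le n-p$ write $\sum_{i=1}^k x_i = 2n-\sum_{i=k+1}^n x_i$; the tail sum is at least $2(n-p-k)+p = 2n-2k-p$ since the entries indexed $k+1,\dots,n-p$ are each $\ge 2$ and the last $p$ entries are $1$, giving $\sum_{i=1}^k x_i \le 2k+p$, which matches the partial sums of ${\bf{b}}$ once the leading $b_1=p+2$ is accounted for. The cases $k=1$ and the handling of the cycle constraint $x_3\ge 2$ (needed to force the correct tail bound near the front) are treated as small separate computations, again following the template of Lemma \ref{l:min_max uni}.

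For the lower bound ${\bf{a}}\prec{\bf{x}}$, I would argue by contradiction on the partial sums, as in Lemma \ref{l:min uni D}. The vector ${\bf{a}}$ is the \emph{most balanced} degree sequence compatible with $p$ pendants: the non-pendant degrees are as equal as possible, all equal to $m$ or $m+1$. Suppose $\sum_{i=1}^k x_i < \sum_{i=1}^k a_i$ for some minimal $k\le n-p$; since ${\bf{a}}$ is balanced its partial sums are the smallest achievable among tuples in $S_{2n,p}$ with the prescribed support of ones, and a counting argument shows that if the first $k$ entries of ${\bf{x}}$ fall short of the balanced partial sum, then the total $\sum_{i=1}^n x_i$ must be strictly less than $2n$, contradicting $\sum x_i=2n$. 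I expect \textbf{this balanced lower bound to be the main obstacle}: unlike the minimum in Lemma \ref{l:min_max uni}, where the extremal tuple is the constant $(2,\dots,2)$, here the extremal tuple involves the floor $m$ and remainder $t$, so the contradiction argument must carefully use that $x_i\le x_k$ for $i\ge k$ together with the exact arithmetic $2n-p = m(n-p)+t$. The cleanest route is to show directly that for each $k\le n-p$ the sum $\sum_{i=1}^k a_i$ is the minimum of $\sum_{i=1}^k x_i$ over all admissible ${\bf{x}}$, which follows because spreading degree mass as evenly as possible minimizes every leading partial sum; this is the standard characterization of the minimal element under majorization on a fixed-sum, fixed-support set, and it yields ${\bf{a}}\prec{\bf{x}}$ immediately.
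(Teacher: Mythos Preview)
Your proposal is correct and follows essentially the same route as the paper: for ${\bf x}\prec{\bf b}$ both you and the paper bound the tail sums from below using $x_j\ge 2$ for $j\le n-p$ and $x_j=1$ for $j>n-p$, and for ${\bf a}\prec{\bf x}$ both argue by contradiction, deducing $x_k\le m$ from a failed partial-sum inequality and then using monotonicity together with $m(n-p)+t=2n-p$ to force $\sum x_i<2n$. One small simplification: the separate ``cycle constraint $x_3\ge 2$'' you flag is not needed here, since $p\le n-3$ already places $x_1,x_2,x_3$ among the non-pendant entries, and the paper's proof indeed does not invoke it.
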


\begin{proof} First, note that ${\bf{a}},{\bf{b}}\in  S_{2n,p}$, $m \ge 2$ and $0 \le t \le n-p$.

Suppose ${\bf{x}}={\bf{x}}_{_{G}}=(x_1,\dots,x_n)\in S_{2n,p}$.  Seeking for a contradiction
assume that  $$\sum_{i=1}^{k}x_i< k(m+1)$$ for some $k\leq t$. Then, $x_k\leq m$ and
$$\sum_{i=1}^{n}x_i< k(m+1) +(n-p-k)m+p\leq 2n,$$
a contradiction. Therefore,
$$\sum_{i=1}^{k}x_i\geq k(m+1)$$ for every $k\leq t$.

Now assume that
$$\sum_{i=1}^{k}x_i< t(m+1)+(k-t)m=km+t$$
for some $t<k\leq n-p$. Then, $x_k\leq m$ and
$$\sum_{i=1}^{n}x_i< km+t +(n-p-k)m+p=2n$$
a contradiction. Therefore,
$$\sum_{i=1}^{k}x_i\geq t(m+1)+(k-t)m$$
for every $t<k\leq n-p$ and
$$\sum_{i=1}^{k}x_i\geq \sum_{i=1}^{k}a_i$$
for every $1 \le k < n$. Thus,
\[{\bf{a}} \prec {\bf{x}}_{_{G}}. \].

Since $x_2\geq \cdots \ge x_{n-p}=2$ and $x_j=1$ if $j>n-p$,
\[\sum_{i=k+1}^{n}x_i\geq \sum_{i=k+1}^{n}b_i\]
for every $1 \le k < n$.
Thus,
\[\sum_{i=1}^{k}x_i=2n-\sum_{i=k+1}^{n}x_i\leq 2n-\sum_{i=k+1}^{n}b_i =\sum_{i=1}^{k}b_i\]
for every $1 \le k < n$ and
\[{\bf{x}}_{_{G}}  \prec {\bf{b}}. \]
\end{proof}

Lemma \ref{l:min_max uni_p} has the following consequences.

\begin{theorem} \label{t:Ifuni_p}
If $G$ is a unicyclic graph with $n \ge 4$ vertices and $1\leq p\leq n-3$ pendant vertices, $m=\big\lfloor \frac{2n-p}{n-p}\big\rfloor$, $t=2n-p-m(n-p)$ and  $f : [2,\infty) \rightarrow \RR$ is a convex function, then
\[tf(m+1)+(n-p-t)f(m)+pf(1) \leq I_f(G) \leq f(p+2) + (n-p-1)f(2) +pf(1),
\]
and both inequalities are attained.
\end{theorem}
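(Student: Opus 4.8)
Theorem \ref{t:Ifuni_p} (proof proposal)

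The plan is to deduce this directly from the majorization relations established in Lemma \ref{l:min_max uni_p} together with the standard fact, recalled in Section 2, that $\Phi(\mathbf{x})=\sum_{i=1}^n f(x_i)$ is Schur-convex whenever $f$ is convex. First I would set $\mathbf{x}=\mathbf{x}_{_{G}}$ and note that $I_f(G)=\sum_{u\in V(G)}f(d_u)=\sum_{i=1}^n f(x_i)=\Phi(\mathbf{x})$, since reordering the vertices does not change the sum. Lemma \ref{l:min_max uni_p} gives $\mathbf{a}\prec\mathbf{x}_{_{G}}\prec\mathbf{b}$, so Schur-convexity of $\Phi$ immediately yields $\Phi(\mathbf{a})\le\Phi(\mathbf{x})\le\Phi(\mathbf{b})$.

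It remains to evaluate $\Phi(\mathbf{a})$ and $\Phi(\mathbf{b})$ explicitly. Reading off the three blocks of $\mathbf{a}$ (the $t$ entries equal to $m+1$, the $n-p-t$ entries equal to $m$, and the $p$ entries equal to $1$) gives
\[
\Phi(\mathbf{a})=t\,f(m+1)+(n-p-t)\,f(m)+p\,f(1),
\]
and the blocks of $\mathbf{b}$ (one entry $p+2$, then $n-p-1$ entries equal to $2$, then $p$ entries equal to $1$) give
\[
\Phi(\mathbf{b})=f(p+2)+(n-p-1)\,f(2)+p\,f(1),
\]
which are exactly the claimed lower and upper bounds.

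To see that both bounds are attained, I would exhibit unicyclic graphs realizing the degree sequences $\mathbf{a}$ and $\mathbf{b}$: for $\mathbf{b}$, take a cycle of length $n-p$ and attach all $p$ pendant vertices to a single vertex of the cycle (giving that vertex degree $p+2$, every other cycle vertex degree $2$, and the $p$ leaves degree $1$); for $\mathbf{a}$, one distributes the pendant vertices and extra edges so that the $n-p$ non-pendant degrees are as equal as possible, all equal to $m$ or $m+1$. The one point requiring a small check is that a simple unicyclic graph with these prescribed degree sequences actually exists; this is routine given the constraints $m\ge 2$ and $0\le t\le n-p$ noted at the start of the proof of Lemma \ref{l:min_max uni_p}, and one verifies that attaching pendant edges to the cycle vertices realizes $\mathbf{a}$. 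The only genuine subtlety is that $f$ is assumed convex on $[2,\infty)$ rather than on $[1,\infty)$, so I would remark that every entry of $\mathbf{a}$ and $\mathbf{b}$ other than the pendant $1$'s lies in $[2,\infty)$, and that the Schur-convexity argument only compares the nonincreasing rearrangements block by block; since the number $p$ of $1$-entries is the same in $\mathbf{a}$, $\mathbf{x}_{_{G}}$, and $\mathbf{b}$, the contribution $p\,f(1)$ is common to all three and the majorization comparison effectively takes place among the first $n-p$ coordinates, all of which are $\ge 2$. Verifying this restriction is the main obstacle, and it is the reason the hypothesis is stated on $[2,\infty)$ rather than on $[1,\infty)$ as in Theorem \ref{t:Ifuni}.
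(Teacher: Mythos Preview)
Your proposal is correct and follows essentially the same approach as the paper: split off the common contribution $p\,f(1)$ from the pendant vertices and apply the majorization from Lemma~\ref{l:min_max uni_p} together with Schur-convexity on the remaining $n-p$ coordinates, all of which lie in $[2,\infty)$. The paper's proof compresses this into two lines by writing $I_f(G)=p\,f(1)+\sum_{d_u\ge 2}f(d_u)$ and invoking the lemma; your version spells out the same reduction more carefully and also supplies explicit extremal constructions (cycle $C_{n-p}$ with pendants attached), which the paper leaves implicit.
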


\begin{proof}
Since
$$
I_f(G)
= pf(1) + \sum_{u\in V(G), d_u \ge 2} f(d_u) ,
$$
and $f$ is a convex function on $[2,\infty)$,
Lemma \ref{l:min_max uni_p} gives the inequalities.
\end{proof}

\begin{theorem} \label{t:If2uni_p}
If $G$ is a unicyclic graph with $n \ge 4$ vertices and $1\leq p\leq n-3$ pendant vertices, $m=\big\lfloor \frac{2n-p}{n-p}\big\rfloor$, $t=2n-p-m(n-p)$ and $f : [2,\infty) \rightarrow \RR$ is a concave function, then
\[f(p+2) + (n-p-1)f(2) +pf(1) \leq I_f(G) \leq tf(m+1)+(n-p-t)f(m)+pf(1) ,
\]
and both inequalities are attained.
\end{theorem}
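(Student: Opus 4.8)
The plan is to mirror the short proof of Theorem \ref{t:Ifuni_p}, reversing every majorization inequality because $f$ is now concave rather than convex. First I would peel off the pendant vertices by writing
\[
I_f(G) = p\,f(1) + \sum_{u\in V(G),\, d_u\ge 2} f(d_u),
\]
so that the surviving sum evaluates $f$ only at integers in $[2,\infty)$, exactly where concavity is assumed. Every tuple in $S_{2n,p}$ has its last $p$ entries equal to $1$, so these contribute the common constant $p\,f(1)$ and drop out of any comparison; it therefore suffices to bound $\sum_{u:\,d_u\ge 2} f(d_u)$, i.e.\ the sum of $f$ over the first $n-p$ coordinates of ${\bf{x}}_{_{G}}$.

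Next I would invoke the Schur-concavity criterion recalled in the Schur convexity section: since $f$ is concave on $[2,\infty)$, the map $\Phi(y_1,\dots,y_{n-p}) = \sum_{i=1}^{n-p} f(y_i)$ is Schur-concave on tuples with all entries $\ge 2$. Lemma \ref{l:min_max uni_p} supplies the chain ${\bf{a}}\prec {\bf{x}}_{_{G}} \prec {\bf{b}}$; because all three tuples share the same leading partial sum $2n-p$ and identical trailing $1$'s, these majorizations restrict to the first $n-p$ coordinates, giving ${\bf{b}}\succ {\bf{x}}_{_{G}} \succ {\bf{a}}$ on that block. Schur-concavity then reverses the order to $\Phi({\bf{b}}) \le \Phi({\bf{x}}_{_{G}}) \le \Phi({\bf{a}})$. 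Evaluating and restoring the pendant term gives $\Phi({\bf{b}}) + p\,f(1) = f(p+2)+(n-p-1)f(2)+p\,f(1)$ and $\Phi({\bf{a}}) + p\,f(1) = t\,f(m+1)+(n-p-t)f(m)+p\,f(1)$, which are precisely the asserted lower and upper bounds.

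Finally, for attainment I would exhibit unicyclic graphs realizing ${\bf{b}}$ and ${\bf{a}}$. The lower bound is attained by the graph formed from the cycle $C_{n-p}$ (legitimate since $p\le n-3$ forces $n-p\ge 3$) by hanging all $p$ pendant edges at a single cycle vertex, whose degree sequence is exactly ${\bf{b}}=(p+2,2,\dots,2,1,\dots,1)$. The upper bound is attained by any unicyclic graph with the balanced sequence ${\bf{a}}$, obtained by taking a short cycle and distributing the $p$ pendant vertices among the non-pendant vertices so that their degrees differ by at most one. The step I expect to demand the most care is this last realizability claim: one must check that the almost-equal sequence ${\bf{a}}$ can indeed be built as a unicyclic graph for every admissible pair $(n,p)$, and in particular that the cycle length and the pendant allocation can be chosen consistently with the prescribed counts $t$ of degree-$(m+1)$ vertices and $n-p-t$ of degree-$m$ vertices. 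The majorization half, by contrast, is immediate once the pendant contribution has been separated off, as in Theorem \ref{t:Ifuni_p}.
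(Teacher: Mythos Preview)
Your proposal is correct and follows essentially the same route as the paper: the paper gives no separate proof for Theorem \ref{t:If2uni_p}, relying implicitly on the argument for Theorem \ref{t:Ifuni_p} (isolate the $p\,f(1)$ contribution, then apply Lemma \ref{l:min_max uni_p} and Schur-concavity on the remaining $n-p$ coordinates) with the inequalities reversed. Your treatment of attainment is more explicit than the paper's, which simply asserts that the bounds are attained without exhibiting the realizing graphs.
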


In a similar way, we obtain the following results.

\begin{theorem} \label{t:IIfuni_p}
If $G$ is a unicyclic graph with $n \ge 4$ vertices and $1\leq p\leq n-3$ pendant vertices, $m=\big\lfloor \frac{2n-p}{n-p}\big\rfloor$, $t=2n-p-m(n-p)$ and $f : [2,\infty) \rightarrow \RR^+$ is a function such that $\log f$ is convex, then
\[f(m+1)^tf(m)^{n-p-t}f(1)^p \leq II_f(G) \leq   f(p+2) f(2)^{n-p-1} f(1)^{p},
\]
and both inequalities are attained.
%Moreover, if $\log f$ is a strictly convex function, then
%the lower bound is attained if and only if $T$ is the path graph and the upper bound is attained if and only if $T$ is the star graph.
\end{theorem}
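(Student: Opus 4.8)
The plan is to reduce this multiplicative statement to the additive one already established in Theorem \ref{t:Ifuni_p}, exactly as was done to pass from theorems \ref{t:Ifuni} and \ref{t:If2uni} to theorems \ref{t:IIfuni} and \ref{t:IIf2uni}. Since $f$ takes positive values, $II_f(G)$ is positive and we may take logarithms; writing $g=\log f$ we have
\[
\log II_f(G) = \sum_{u\in V(G)} \log f(d_u) = \sum_{u\in V(G)} g(d_u) = I_g(G).
\]
By hypothesis $g=\log f$ is convex on $[2,\infty)$, so Theorem \ref{t:Ifuni_p} applies directly with $g$ in the role of $f$.

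First I would invoke Theorem \ref{t:Ifuni_p} for the convex function $g$ to obtain
\[
t\,g(m+1)+(n-p-t)\,g(m)+p\,g(1) \le I_g(G) \le g(p+2)+(n-p-1)\,g(2)+p\,g(1).
\]
Next I would rewrite both outer expressions using $g=\log f$ together with the laws of logarithms: the left-hand side equals $\log\!\big(f(m+1)^t f(m)^{n-p-t} f(1)^p\big)$ and the right-hand side equals $\log\!\big(f(p+2)\, f(2)^{n-p-1} f(1)^p\big)$. Since the exponential is strictly increasing, applying it to the whole chain preserves the inequalities and yields precisely
\[
f(m+1)^t f(m)^{n-p-t} f(1)^p \le II_f(G) \le f(p+2)\, f(2)^{n-p-1} f(1)^p,
\]
as claimed.

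Finally, for the attainment clause I would note that the extremal graphs are inherited from Theorem \ref{t:Ifuni_p}: the unicyclic graphs realizing the degree sequences ${\bf{a}}$ and ${\bf{b}}$ of Lemma \ref{l:min_max uni_p} turn the inequalities for $I_g$ into equalities, and because $\exp$ is a strictly monotone bijection these same graphs turn the inequalities for $II_f$ into equalities. The only points requiring care are bookkeeping ones rather than genuine obstacles: one must observe that the $p$ pendant vertices contribute the fixed factor $f(1)^p$ (equivalently the constant summand $p\,g(1)$), so that convexity of $g$ is needed only on $[2,\infty)$, matching the hypothesis of Theorem \ref{t:Ifuni_p}; and one must check that applying $\exp$ does not reverse the inequalities, which holds since it is increasing. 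I expect no substantive difficulty, since all the combinatorial content—the majorization bounds of Lemma \ref{l:min_max uni_p}—has already been absorbed into Theorem \ref{t:Ifuni_p}.
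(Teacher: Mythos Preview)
Your proposal is correct and follows exactly the paper's own approach: the paper simply remarks ``In a similar way, we obtain the following results,'' meaning one passes from $II_f$ to $I_{\log f}$ via $\log II_f(G)=\sum_{u\in V(G)}\log f(d_u)$, applies Theorem~\ref{t:Ifuni_p} to the convex function $g=\log f$, and exponentiates. Your observation that the pendant contribution $p\,g(1)=\log f(1)^p$ is a fixed additive constant (so convexity is only needed on $[2,\infty)$) is precisely the content of the short proof of Theorem~\ref{t:Ifuni_p}.
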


\begin{theorem} \label{t:IIf2uni_p}
If $G$ is a unicyclic graph with $n \ge 4$ vertices and $1\leq p\leq n-3$ pendant vertices, $m=\big\lfloor \frac{2n-p}{n-p}\big\rfloor$, $t=2n-p-m(n-p)$ and $f : [2,\infty) \rightarrow \RR^+$ is a function such that $\log f$ is concave, then
\[f(p+2) f(2)^{n-p-1} f(1)^{p} \leq II_f(G) \leq  f(m+1)^tf(m)^{n-p-t}f(1)^p,
\]
and both inequalities are attained.
%Moreover, if $\log f$ is a strictly concave function, then
%the lower bound is attained if and only if $T$ is the star graph and the upper bound is attained if and only if $T$ is the path graph.
\end{theorem}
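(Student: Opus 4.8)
The plan is to reduce this multiplicative bound to the Schur-concavity argument already used for Theorem \ref{t:If2uni_p}, via the identity $\log II_f(G) = \sum_{u\in V(G)} \log f(d_u)$. Since $\log$ is increasing, controlling this sum and then exponentiating will yield the product bounds, so the statement is simply the ``concave-log'' companion of Theorem \ref{t:IIfuni_p}.

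First I would peel off the pendant vertices. By hypothesis $G$ has exactly $p$ vertices of degree $1$, which occupy the last $p$ coordinates of ${\bf{x}}_{_{G}}$ and contribute the common factor $f(1)^p$; writing $II_f(G) = f(1)^p \prod_{u:\, d_u\ge 2} f(d_u)$ reduces the problem to the $n-p$ non-pendant degrees, all of which lie in $[2,\infty)$, the domain where $\log f$ is assumed concave.

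Next I would transfer the majorization of Lemma \ref{l:min_max uni_p}, namely ${\bf{a}} \prec {\bf{x}}_{_{G}} \prec {\bf{b}}$, to the non-pendant parts. Because the last $p$ coordinates of all three tuples equal $1$ and the three truncated $(n-p)$-tuples share the common sum $2n-p$, the partial-sum inequalities for $k\le n-p$ restrict verbatim to a majorization ${\bf{a}}' \prec {\bf{x}}' \prec {\bf{b}}'$ of the truncations to the first $n-p$ coordinates. Since $\log f$ is concave on $[2,\infty)$, the map ${\bf{y}}\mapsto \sum_i \log f(y_i)$ is Schur-concave there (as recalled in Section 2), so $\sum_i \log f(b'_i) \le \sum_i \log f(x'_i) \le \sum_i \log f(a'_i)$. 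Exponentiating, multiplying through by $f(1)^p$, and evaluating $\prod_i f(b'_i) = f(p+2)f(2)^{n-p-1}$ and $\prod_i f(a'_i) = f(m+1)^t f(m)^{n-p-t}$ delivers exactly the two displayed inequalities.

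It remains to verify that both bounds are attained, that is, that ${\bf{b}}$ and ${\bf{a}}$ are realized by genuine unicyclic graphs with $n$ vertices and $p$ pendant vertices. For the lower bound, attaching all $p$ pendant edges to a single vertex of the cycle $C_{n-p}$ (permissible since $p\le n-3$ forces $n-p\ge 3$) produces a unicyclic graph with degree sequence ${\bf{b}}$. The main obstacle is the upper bound: one must build a unicyclic graph whose $n-p$ non-pendant vertices carry the balanced degrees $m+1$ (with multiplicity $t$) and $m$ (with multiplicity $n-p-t$) while accommodating the $p$ pendant edges. This is a routine but slightly delicate construction, obtained by taking a cycle through suitably many of the non-pendant vertices and then distributing the pendant edges so as to equalize the degrees; it confirms that ${\bf{a}}$ is realizable and completes the proof.
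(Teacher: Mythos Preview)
Your proposal is correct and follows the same approach as the paper: the paper simply writes ``In a similar way, we obtain the following results'' and relies on the identity $\log II_f(G)=\sum_{u}\log f(d_u)$ together with the peeling-off of the $p$ pendant vertices already carried out in the proof of Theorem~\ref{t:Ifuni_p}, applied now with $\log f$ concave in place of $f$ concave. Your handling of attainment is in fact more explicit than the paper's; note that for the upper bound you can simply take the cycle $C_{n-p}$ through \emph{all} non-pendant vertices and attach $a_i-2$ pendant edges to the $i$-th cycle vertex, which works since $m\ge 2$ and $\sum_{i\le n-p}(a_i-2)=p$.
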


Therefore, if $G$ is a unicyclic graph with $n \ge 4$ vertices and $1\leq p\leq n-3$ pendant vertices, the corresponding sharp upper and lower bounds for $M_1^\a(G)$, $M_1(G)$, $F(G)$, $ID(G)$, $SEI_a(G)$, $NK^*(G)$ and $NK(G)$ can be easily computed as in the previous sections.

\smallskip

The lower bound obtained in this way for $M_1(G)$ is proved in \cite{GKJA} with different arguments.

\smallskip

We state now just the inequalities for $SEI_a(G)$, since we obtain them in this case for a larger range of values of the parameter $a$.

\begin{theorem} \label{t:SEI5}
If $G$ is a unicyclic graph with $n \ge 4$ vertices and $1\leq p\leq n-3$ pendant vertices, $a>1$ or $0<a\le e^{-1}$,
$m=\big\lfloor \frac{2n-p}{n-p}\big\rfloor$ and $t=2n-p-m(n-p)$, then
\[
t(m+1)a^{m+1}+(n-p-t)m a^{m} +p a
\leq SEI_a(G)
\leq (p+2)a^{p+2} + (n-p-1)2a^2 +pa,
\]
and both inequalities are attained.
\end{theorem}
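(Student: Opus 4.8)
The plan is to realize $SEI_a$ as a special case of the functional index $I_f$ and then apply Theorem~\ref{t:Ifuni_p}. Taking $f(t)=t\,a^{t}$, we have $SEI_a(G)=\sum_{u\in V(G)} d_u a^{d_u}=I_f(G)$, and the two bounds in the statement are exactly those produced by Theorem~\ref{t:Ifuni_p} after substituting $f(m+1)=(m+1)a^{m+1}$, $f(m)=m\,a^{m}$, $f(2)=2a^2$, $f(p+2)=(p+2)a^{p+2}$ and $f(1)=a$. Thus the entire argument reduces to checking the convexity hypothesis of that theorem, namely that $f$ is convex on $[2,\infty)$ for the stated range of $a$.

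First I would record the second derivative. As computed before Theorem~\ref{t:SEI1}, $f''(t)=2a^{t}\log a+t\,a^{t}(\log a)^2=a^{t}\log a\,(2+t\log a)$, and I would analyze its sign in two cases. If $a>1$, then $\log a>0$ and $2+t\log a>0$ for every $t\ge 2$, so $f''>0$ and $f$ is convex on $[2,\infty)$. If $0<a\le e^{-1}$, then $\log a\le -1<0$, so $a^{t}\log a<0$; moreover $2+t\log a\le 2+2\log a\le 0$ for all $t\ge 2$, whence $f''(t)=(a^{t}\log a)(2+t\log a)\ge 0$. Hence $f$ is convex on $[2,\infty)$ in both cases, and Theorem~\ref{t:Ifuni_p} applies to yield the displayed inequalities together with their attainment.

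The one point deserving attention --- and the sole reason this theorem differs from Theorems~\ref{t:SEI1} and~\ref{t:SEI2} --- is the appearance of the weaker threshold $e^{-1}$ in place of $e^{-2}$. This is a consequence of the structure of Theorem~\ref{t:Ifuni_p}: since the $p$ pendant vertices contribute the fixed amount $p\,f(1)=pa$ and the extremal degree sequences ${\bf{a}}$ and ${\bf{b}}$ of Lemma~\ref{l:min_max uni_p} involve only entries $\ge 2$, the theorem requires convexity of $f$ merely on $[2,\infty)$ rather than on all of $[1,\infty)$. The condition $2+t\log a\le 0$ then needs to hold only for $t\ge 2$ (giving $a\le e^{-1}$) instead of for $t\ge 1$ (which would force $a\le e^{-2}$). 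I expect this observation to be the main content of the proof; the substitutions and the verification of attainment are then routine.
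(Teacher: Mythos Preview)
Your proof is correct and follows exactly the same approach as the paper's: define $f(t)=t\,a^{t}$, verify convexity of $f$ on $[2,\infty)$ via the second derivative, and invoke Theorem~\ref{t:Ifuni_p}. Your case analysis and your explanation of why the threshold relaxes from $e^{-2}$ to $e^{-1}$ are more explicit than the paper's terse assertion, but the argument is the same.
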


\begin{proof}
If we define $f(t)=t a^{t}$, then $f''(t)=2 a^{t}\log a + t a^{t}(\log a)^2$.
Hence, $f$ is strictly convex on $[2,\infty)$ if either $a>1$ or $a\le e^{-1}$.
Therefore, Theorem \ref{t:Ifuni_p} gives the inequalities.
\end{proof}

\section{Acknowledgements}
The first author was partially supported by a grant from Ministerio de Ciencia, Innovaci\'on y Universidades (PGC2018-098321-B-I00), Spain,
the second author by two grants from Ministerio de Econom{\'\i}a y Competitividad, Agencia Estatal de
Investigaci\'on (AEI) and Fondo Europeo de Desarrollo Regional (FEDER) (MTM2016-78227-C2-1-P and MTM2017-90584-REDT), Spain.

\end{document}